\newtheorem{thm}{Theorem}[section]
\newtheorem{prop}[thm]{Proposition}
\theoremstyle{definition}
\theoremstyle{remark}
\numberwithin{equation}{section}
\def\tto{\;{\lower 1pt\hbox{$\rightarrow$}}\kern-10pt
\hbox{\raise 2pt\hbox{$\rightarrow$}}\;}
\newcommand{\bff}{\bf F}
\begin{document}
\title[Robust Toll Pricing]{Robust toll pricing: A novel approach}
\author{T. Dokka}%
\address{T. Dokka: Department of Management Science, Lancaster University, UK}%
\email{t.dokka@lancaster.ac.uk}%
\author{A.B. Zemkoho}
\address{A.B. Zemkoho: School of Mathematics, University of Southampton, Southampton, UK}
\email{a.b.zemkoho@soton.ac.uk}
%\thanks{}%
\author{S. Sen Gupta}
\address{S. Sen Gupta: Department of Economics, Lancaster University, UK}
\email{s.sengupta@lancaster.ac.uk}
%\thanks{}
\author{F.T. Nobibon}
\address{F.T. Nobibon: Fedex Europe, Belgium}
\email{tallanob@gmail.com}
%\thanks{}
\thanks{A preliminary version of this work has appeared in Proceedings of ATMOS'2016}
\thanks{The work of the second author is partially supported by the EPSRC grant with reference EP/P022553/1}%
%\subjclass[2010]{90C26, 90C31, 90C46}%
\keywords{Toll-pricing, conditional value at risk, robust optimization}%

\date{\today}%
\begin{abstract}
We study a robust toll pricing problem where toll setters and users have different level of information when taking their decisions. Toll setters do not have full information on the costs of the network and rely on historical information when determining toll rates, whereas users decide on the path to use from origin to destination knowing toll rates and having, in addition, more accurate traffic data. Toll setters often also face constraints on price experimentation which means less opportunity for price revision. Motivated by this we propose a novel robust pricing methodology for fixing prices where we take non-adversarial view of nature different from the existing robust approaches. We show that our non-adversarial robustness results in less conservative pricing decisions compared to traditional adversarial nature setting. We start by first considering a single origin-destination parallel network in this new robust setting and formulate the robust toll pricing problem as a distributionally robust optimization problem, for which we develop an exact algorithm based on a mixed-integer programming formulation and a heuristic based on two-point support distribution. We further extend our formulations to more general networks and show how our algorithms can be adapted for the general networks. Finally, we illustrate the usefulness of our approach by means of numerical experiments both on randomly generated networks and on the data recorded on the road network of the city of Chicago.
\end{abstract}
\maketitle
\tableofcontents
\section{Introduction and Literature} \vspace{0.5pc}

\subsection{Introduction}
%\todo{ this paragraph is about toll roads, private toll roads and their problems}
Road networks played a crucial role in economic and social development acting as trade enablers. Hence they find an important place in every government's policies. There has been much debate on how roads building should be funded. Traditionally it has been the case that roads were built and maintained by the funds collected from the public in the form of taxes. However, many economists, researchers and policymakers questioned this practice \cite{lindsay_06}. The main critique being that significant proportion of the tax payers may not be using the road being built. In fact, in his book \emph{The Wealth of Nations}, Adam Smith argued ``When the carriages which pass over a highway or a bridge ... pay toll in proportion to their weight Šthey pay for the maintenance of those public works exactly in proportion to the wear and tear which they occasion of them. It seems scarce possible to invent a more equitable way of maintaining such works". This idea has gained much more attention in last few decades and ever more popular today than before. As a result private investment in road building has seen a significant increase. Another main reason is that often not enough tax is collected by governments, especially in developing countries to fund large road building projects. To tackle this governments are encouraging and attracting private players in road building and many of these projects are now done under Public-Private-Partnership (PPP) framework. The PPP-type model is widely adopted due to advantages like bridging the fiscal gap and also efficient project management practices of private sector companies compared to governmental agencies. For example in India, many new highways are built with PPP type of model after the introduction of the amendment of National Highway Act 1956 in the year 1995, which enabled private investors participation in highway construction and maintenance, see \cite{Singh_IJPM06} and references therein.
 Typically these projects employ build-operate-transfer model. Here the investing company enters in a contract with government to build a road/highway. In return of the investment, the company is allowed to collect tolls for an agreed period of time before the transfer of ownership to government. In fact, tolls have become a primary way to encourage private investment in public infrastructure; see \cite{Brown_AusER05}. There are both successes and failures of this model. One of the notable examples is the M6 toll between Cannock and Coleshill, which opened in 2003. According to a BBC News Report, ``the company operating M6 toll made a 1 million pound loss in the year 2012",``drivers have said the road is underused because of its prices". Therefore, a key element to the success of this model is the revenue generated from  tolls. The investor company's main objective is to maximize the revenue from tolls. Hence the ``right" toll price can be the defining factor to the success of the project. The key to a successful revenue maximization pricing mechanism lies in understanding the network users options compared to the toll road.\vspace{0.5pc}

%\todo{ this paragraph motivates toll pricing is not deterministic setting and uncertainty needs to be accounted. }

 In \cite{labbe_mgmt_sci}, a bilevel model is proposed to capture the situation where the toll-setter anticipates the network user's reaction
to his decisions. In a full information situation it is assumed that costs of travel on the network are fixed and known to both toll-setter and users. However, cost/time of travel is rarely constant over time in a real world transportation network. Depending on many factors such as weather, day and time of week, accidents, etc., there can be considerable variation in cost. Having said this our ability to have a reasonably good estimate of travel time has never been better with latest technology able to provide us with almost real time traffic updates.This means users may change their decisions over time depending upon then costs/times of travel in the network. Toll-setter, however, suffers from the disadvantage that (if not always, more often in practice) he is not allowed to change the toll very frequently due to policy regulations and other constraints. In most cases, toll is required to be fixed for at least a minimum period. Even if kept unchanged for a minimum period, changing toll price and especially increasing, usually has a negative impact on user's beliefs and may end up resulting in reduced revenues. In such a situation toll-setter has to make his decisions under uncertainty about user's future options. On the other hand users have full (or reasonbly accurate) information before they make their decisions. \vspace{0.5pc}

%\todo{this paragraph gives a short statement about how we fill the gap}

In this work we study a robust toll-pricing mechanism which aims to minimize the risk of the toll-setter against this uncertainty. In doing so, we use the ideas from robust optimization literature and show that our approach is very near to the conditional value-at-risk approach used in portfolio optimization and other problems.

%\section{Robust toll pricing}
\subsection{Literature}
Profit and revenue maximization problems over a transportation network are given much attention in pricing literature, see for eg., \cite{vanhoesel_ejor, bouthou_ijoc_2007,kara_focs2004} to name a few. Within a huge body of papers, many have studied the application of bilevel programming paradigm to pricing problems, such as \cite{labbe_mgmt_sci}; and many subsequent papers, \cite{cote_jrpm2003}, \cite{bouthou_networks2007}, \cite{labbe_jrpm2010}, \cite{tuncel_cor2016}, \cite{alain_aor} applied bilevel framework to  several different application areas. A deterministic version of the problem we study in this paper has been investigated in \cite{labbe_mgmt_sci,labbe_trans_sci,labbe_4or,labbe_do,labbe_networks,vanhoesel_ejor,bouthou_ijoc_2007}. However, the stochastic extensions of the problem have gained more interest only in recent years. Two different stochastic extensions of the model in \cite{labbe_mgmt_sci} have been studied in \cite{savard_ts2015} and \cite{savard_partb2013}. In \cite{savard_ts2015}, authors study the logit pricing problem. \cite{savard_partb2013} studies the two-stage stochastic problem with recourse extension of deterministic toll pricing problem also taking view of limited price revision opportunity. In this paper we study the robust toll pricing problem mainly on single commodity parallel networks and show that the approach can be extended to general networks. By single commodity we mean the network has a single origin and single destination and by parallel we mean there can be several roads connecting origin and destination. The deterministic pricing problem on such networks is easy to solve using a closed-form formula. That is given costs on the alternative roads set the toll to the least cost. To the best of our knowledge, there is no work on robust pricing in the presence of uncertainty even in such basic networks, and as we will show that the pricing problem in these networks can itself be quite rich. There, however, are two studies where robust optimization framework is applied to pricing problems, in \cite{violin_thesis}  and \cite{gardner_TS_partc}. In both these works the models considered are different from our model and problem setting. Furthermore, the budgeted uncertainty model considered in \cite{violin_thesis} gives tractable models but may not be best in terms of robustness as found in \cite{Dokka_Goerigk_atmos17}. Understanding the pricing problem in parallel networks will provide useful insights into the complexity of pricing for more general networks involving more commodities and with variable demands. As we will show that the ideas we propose in this work will provide a basis for solving toll-pricing problem in more general networks.%We then extend our methodology to more general not necessarily parallel networks.
\vspace{0.5pc}

The broader scope of the paper is to propose a bilevel type methodology to pricing problems with limited pricing (or price revising) power. Within this broader scope we are mainly inspired by the toll pricing problem faced by a risk averse toll setter.
Our aim in this work is to provide a better understanding of the toll-pricing problem faced by a risk-averse toll-setter when there is uncertainty on non-toll costs. We use the framework of distributional robustness which is very useful in making optimal decisions under limited or imprecise information, see \cite{goh_OR10} for recent developments on distributionally robust optimization.  Our work also fits into the emerging literature on general static and dynamic pricing that studies pricing problem faced by a seller with insufficient information about demand. \cite{bergemann_sclag_JEEA08} and \cite{bergemann_sclag_JET11} study robust static pricing problems who formulate the problem as minmax regret problem. Within the dynamic pricing literature we mention here \cite{besbes_zeevi_OR09, denboer_zwart_OR15, lim_shanthikumar_OR07, keskin_zeevi_OR14, denboer_zwart_MS14}. Also see the survey on studies on dynamic pricing in \cite{denboer_survey}.  While most of this literature studies dynamic pricing and learning and earning problems, to the best of our knowledge none take into consideration of the case when seller has limited price revising opportunity due to price controls imposed.
\vspace{0.5pc}
%I should also be added that research on robust-type bilevel optimization is at a very early stage, see, e.g., [...] for some references that we aware of, and which mainly focus on .... Hence, we believe that this work will also contribute to enrich the literature in this area.

\nocite{DZSNb_16}
%structure
The rest of the paper is organized as follows: problem definition, assumptions and some notation are described in Section \ref{model}; Section \ref{method} gives the description of the main robust model proposed in this paper; followed by Section \ref{inner_prob} where we discuss the inner or lower level problem in our bilevel formulation; in Section \ref{soln_algs} we give two algorithms: one exact based on the MIP formulation for inner problem given Section \ref{inner_prob} and a two point heuristic for solving toll pricing problem; Section \ref{char_UFN} discusses and illustrates characteristics of our robust model; Section \ref{comp_exps} gives the numerical performance of two point algorithm on simulated and real data sets; Extensions and future work are discussed in
Section \ref{future}.

%\subsection{Model and assumptions}\label{model}
\section{Problem definition}\label{model}

We will first describe the deterministic pricing model as used in \cite{labbe_mgmt_sci}.
We consider a single-commodity transportation network with a single origin and single destination, $G = (N, A)$, where $N$ (of cardinality $n$) denotes the set of nodes,  and $A$ (of cardinality $m$) the set of Arcs.  The arc set $A$ of the network $G$ is partitioned into two subsets $A_1$ and $A_2$, where $A_2$ denotes the set of roads which are toll-free (\emph{public roads}), and $A_1$ the set of roads which are owned by a toll-setter (\emph{toll roads}). There can be two parallel roads between any two nodes in $G$.

With each toll arc $a$ in $A_1$, we associate a generalized travel cost composed of two parts:  toll ($r_a$) - set by toll-setter expressed in time units, and  non-toll cost ($c_a$) - which in our case varies over time (discretized into unit intervals). An arc $a\in A_2$ only bears the non-toll cost {\bf $c_a$}. Once the toll is set on arcs in $A_1$, it cannot be changed for $T$ consecutive time periods. We will refer to $T$ consecutive time periods in which toll is fixed as \emph{tolling period}. At the end of the tolling period toll setter may be able to revise his price. However, in this paper we only consider static pricing problem which can still be used in dynamic case but does not explicitly optimize pricing decisions over time. We denote $b\in R^n$ the fixed demand, with the assumption that all nodes except origin and destination nodes have a demand equal to 0. Assuming fixed demand and neglecting congestion implies users choose shortest paths between the origin and destination. Further we assume that when faced with two equal alternatives a user will choose the one which maximizes the revenue of toll-setter. Another key assumption is that it allows conversion from time to money and assumes it to be uniform throughout the users. In other words, this can be seen one user using network every time period. Under this setting when the non-toll costs are known to both toll-setter and user the question toll-setter faces is:\emph{How to set prices which maximizes the total toll revenue when the network user chooses shortest path to minimize his cost?}\vspace{0.5pc}

%In the absence of uncertainty on non-toll costs, the deterministic toll-pricing problem is modeled as the following bilevel optimization problem, see \cite{labbe_mgmt_sci}:
%\begin{eqnarray}{\textrm{(TOP)}}\hspace{0.5cm}
%\max_{R,X} &&F\left(R,X\right) := \sum_{a\in A_1} r_a x_a                                   \nonumber \\
%s.t. && \min_{X}~\sum_{a\in A_1}\left(c_a+r_a\right)x_a+\sum_{a\in A_2}c_a x_a           \label{formu_det}\\
%&&\phantom{\min_{X}aaa} \sum_{a\in i^+} x_a -\sum_{a\in i^-} x_a=b_i, \hspace{1cm}  \forall i\in N\nonumber        \\                      %  %\label{state3}\\
%&&\phantom{\min_{X}aaa} x_a \geq 0, \hspace{1cm}  \forall a\in A,\nonumber %\label{state4}\\
%%&&\phantom{\min_{X}aaa} x_a = \sum_{k,\ell}d^{k\ell}x^{k\ell}_a , \hspace{1cm}  \forall a\in N \nonumber
%\end{eqnarray}

%Where $X$ are decision variables in lower level problem which in this setting is a shortest path problem, for each node $i$, $i^+$ (resp. $i^-$) corresponds to the leaving (resp. entering) flow arcs, and  $b_i=1$ if $i$ is the origin of the commodity, $b_i=-1$ if $i$ is the destination; $R$ are upper level decisions, that is, tolls to be charged.

The deterministic problem is well understood both conceptually and algorithmically. Our focus in this paper will be to extend the above deterministic model to the case when there is uncertainty about non-toll costs $c_a$. That is, non-toll costs are not constant and can vary over time. Our uncertainty model and assumptions are as follows:

\begin{itemize}
\item In our model toll-setter has the historical information encoded in the form of previously observed states. A state $s$ corresponds to an observed state of the network in a single time period. In other words, in each state $s$ the non-toll cost on each arc $a\in A$ is fixed denoted as $c^s_a$. The advantage of modeling uncertainty in this way is that correlations between different arcs of network are captured in the states.
\item The number of states equals to $\#H \times T$. That is, toll-setter observes $\#H$ tolling periods. Hereafter we will write $T$ for the rest of all tolling periods assuming toll setter cannot change his price in the future.

\item  The cost distribution (unknown to toll-setter) of each arc is assumed to be fixed and belongs to a set of non-negative distributions $D$ with support in $\Omega = [q,Q]$. One can also consider different supports for different arcs, however, we see $\Omega$ as the aggregated support set.

\item We assume the variability on each arc is bounded, that is the variance-to-mean ratio for the toll period is bounded by a constant which is unknown to the toll-setter. This is usually the case in real world networks.
\end{itemize}

Given this setting our aim is to answer the following question faced by a toll-setter:\vspace{0.5pc}

\emph{How to set toll prices under uncertainty of non-toll costs, when network costs are random with unknown distribution?}

We will now study robust pricing methodologies to answer this question. Starting with a simple two link parallel network we first review more popular robust methodology and then propose a new robust methodology.

%%%%%%%%%%%%
\section{Robust Model}\label{method}
%\subsection{Network with two parallel arcs}\label{method}
Consider a simple parallel network with just two parallel arcs connecting the origin and destination. Let one of these arcs be the toll arc and the other arc is the non-toll arc whose costs are not known. We assume for the ease of exposition that the non-toll costs on the toll arc are zero or negligible. We will later remove this assumption and show the method can be extended to such a case. As mentioned in previous section toll-setter has a sample of costs of $\#H$ tolling periods from the recent history. Using this sample, toll-setter wishes to calculate the toll on the toll arc. Hereafter, we will refer to non-toll arc as $a$. In the rest of the section we will drop the suffix $a$ for the ease of notation and readability. If toll-setter knows the distribution $F$ (we denote the density of $F$ with $\bff$) of $c$ then to fix the toll which maximizes his expected revenue he solves the following optimization problem which maximizes his expected revenue:

\begin{equation}\label{first_eq}
\max_{r\in {\Omega}}  \hspace{0.5pc} \int_{r}^Q r \bff (c) dc.
\end{equation}

To solve this problem note that we can rewrite the objective as $r[1 -  \int_{q}^r \bff(c) dc]$. We can solve the problem by equating the first derivative which is $1-\left( \int_{q}^r \bff(c) dc + r \bff(r)\right)$ to 0. For example if $F$ is uniform distribution with support $\Omega$ then $r$ is obtained by solving $\frac{r-q}{Q-q} + r\frac{1}{Q-q} = 1$, which implies revenue maximizing integer toll is $\frac{Q}{2}$.
%Given the distribution $F$ this is a trivial problem, however, $F$ is not known to the toll-setter.

 In the absence of this knowledge, a risk -averse toll-setter would prefer to insure his revenues by setting tolls such that the usage of toll arc is maximized as much as possible. On the other hand, setting toll too low, for example close to $q$, will result in high usage but does not necessarily mean better revenue as setting it to a higher price may give much better revenue. Setting it too high may mean no usage and loss of revenues. Suppose that, toll-setter first decides his toll and then nature, who plays adversary to toll-setter, will decide on $F$. Then toll-setter wishes to calculate a robust toll price which maximizes his revenue by solving the following optimization problem.

\begin{eqnarray}
\max_{r\in {\Omega}} & \hspace{0.5pc}  \int_{r}^Q r \bff(c) dc \nonumber\\
s.t. \hspace{0.5pc} \min_{F\in D} & \hspace{0.5pc}  \int_{r}^Q r \bff(c) dc \nonumber\\
s.t. \hspace{0.5pc}  & \underline{u} \leq \mu_F(c)\leq \overline{u} \label{formu1}\\
& \sigma^2_F(c) \leq \overline{\kappa} \mu_F(c) \nonumber
\end{eqnarray}

Here the parameters $\underline{u}$, $\overline{u}$ are calculated as confidence limits of mean; $\overline{\kappa}$ is the belief of toll-setter formed after observing data and also is a parameter controlling the risk averseness of toll setter. This belief can change over time and possibly converge to $\kappa$.  Assuming an adversarial nature is very common in robust optimization and online optimization literature, for example it has been used in \cite{bergemann_sclag_JEEA08} and \cite{lim_shanthikumar_OR07}. We will refer to this as \emph{AN} model. Note that the constraints in (\ref{formu1}) correspond mainly to nature's problem i.e., to find a distribution satisfying mean (or moment) constraint. The second constraint limits the possible distributions by using the assumption of bounded variability.  Such a situation with sufficiently high allowed variability gives too much power to adversarial nature forcing toll-setter (to be too conservative) to  set very low $r$ if he chooses to be robust against all possible $F\in D$. To avoid such over-conservativeness, we propose to consider that nature does not play such a role. Instead nature's objective is to minimize the overall expected cost of the network user, that is:

\begin{equation}
\int_r^Q r \bff(c) dc + \int_q^r c \bff(c) dc,
\end{equation}

where the first term is the expected cost of travel on toll road and second term is expected cost on non-toll road. Toll-setter then solves the following bi-level distributionally robust program to find the robust $r$:

\begin{eqnarray}
  & \max_{r\in {\Omega}} \hspace{0.5pc}  \int_r^Q r \bff(c) dc \nonumber \\
 & \min_{F\in D} \hspace{0.5pc} \int_r^Q r \bff(c) dc + \int_q^r c \bff(c) dc \nonumber\\
s.t. & \underline{u} \leq \mu_F(c)\leq \overline{u} \label{formu12}\\
& \sigma^2_F(c) \leq \overline{\kappa}   \mu_F(c) \nonumber
\end{eqnarray}

We refer to the model (\ref{formu12}) as \emph{UFN} model. To illustrate the difference between nature's role in formulations (\ref{formu1}) and (\ref{formu12}), consider the following example.\\
Suppose $\Omega=[1,100]$ and $r=90$, consider two distributions: $F_1$ puts 0.45 probability mass on 89, 0.5 on 109 and 0.05 on 110; $F_2$ puts 0.135 probability mass on 75 and 0.865 on 104.
 Between these two distributions a nature with objective in (\ref{formu1}) will choose first strategy whereas in (\ref{formu12}) nature will choose the second one as it minimizes expected cost of the user which is 87.975 for $F_2$ as against $89.95$ for $F_1$.

The main motivation behind our model comes from the deterministic bilevel model where, under full information, toll-setter and user have conflicting objectives. However, user's objective is not to make his decisions to decrease toll-setter's revenue but to minimize his own cost. Similarly, in our model the lower level decision maker is acting to minimize the expected cost of user by choosing a distribution which is consistent with the observed mean.\\

Another way of interpreting UFN model is as follows: the distributions available to nature in the lower level problem can be seen as willing-to-pay (WtP) distributions of different users whose mean is consistent with the toll-setter's belief. In the lower level, by choosing a user whose expected WtP is minimum, toll-setter is taking robust decision by choosing a toll which maximizes his revenue from the user with worst expected WtP.\\

We will now relate UFN model with worst-case conditional value-at-risk (CVaR) optimization. To do this we first rewrite the objective function using the assumption of fixed support and then introduce a parameter $\epsilon$ similar to the risk level in CVaR.

Let us start with rewriting the objective function, since we consider $F$ with support in $\Omega$, we can use $\int_r^Q  \bff(c) dc + \int_q^r  \bff(c) dc = 1$ and rewrite the nature's objective function as
\begin{eqnarray}
\int_r^Q r \bff(c) dc + \int_q^r c \bff(c) dc &= r (1 - \int_q^r \bff(c) dc) + \int_q^r c \bff(c) dc  \nonumber\\
& =  r - \int_q^r  (r-c) \bff(c) dc
\end{eqnarray}

 Consider now the following function,
\begin{equation}
f(r, F) =\left[  r - \frac{1}{(1-\epsilon)} \int_q^r  (r-c) \bff(c) dc \right], \nonumber
\end{equation}
where $\epsilon \in (0,1)$. Observe that $f$ is nothing but nature's objective with an additional term involving $\epsilon$. We have the following property of $f$

\begin{prop}\label{prop1}
For a fixed $F\in D$ and $\epsilon \in (0,1)$, $f(r, F)$ is concave and continuously differentiable, and the maximum of $f$ is attained at $r\in \Omega$ such that $\int_{c\leq r} \bff(c) dc = 1- \epsilon$.
\end{prop}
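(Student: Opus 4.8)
The plan is to reduce the statement to a one–dimensional calculus argument by differentiating $f(\cdot,F)$ in its first variable. First I would set $\Phi(r):=\int_q^r \bff(c)\,dc$, the cumulative distribution function associated with the density $\bff$, so that $\Phi$ is continuous and nondecreasing on $\Omega=[q,Q]$ with $\Phi(q)=0$ and $\Phi(Q)=1$. The only genuine computation is to differentiate $G(r):=\int_q^r (r-c)\bff(c)\,dc$ with respect to $r$, which occurs both as the upper limit and inside the integrand. By the Leibniz rule the boundary contribution is $(r-c)\bff(c)\big|_{c=r}=0$, so $G'(r)=\int_q^r \bff(c)\,dc=\Phi(r)$, whence
\[
\frac{\partial f}{\partial r}(r,F)=1-\frac{1}{1-\epsilon}\,\Phi(r).
\]
Since $\Phi$ is continuous, $r\mapsto \partial f/\partial r(r,F)$ is continuous on $\Omega$, which gives the continuous differentiability of $f(\cdot,F)$.

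For concavity I would argue from monotonicity of the derivative rather than from a second derivative, so as not to require $\bff$ to be continuous: because $\Phi$ is nondecreasing, $r\mapsto 1-\tfrac{1}{1-\epsilon}\Phi(r)$ is nonincreasing, hence $f(\cdot,F)$ has a nonincreasing derivative on $\Omega$ and is therefore concave there. (If one assumes $\bff$ continuous, the same follows at once from $\partial^2 f/\partial r^2=-\tfrac{1}{1-\epsilon}\bff(r)\le 0$.)

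Finally, for the maximizer: $f(\cdot,F)$ is continuous on the compact set $\Omega$, so a maximizer $r^\star$ exists. Since $\partial f/\partial r(q,F)=1>0$ and $\partial f/\partial r(Q,F)=1-\tfrac{1}{1-\epsilon}=-\tfrac{\epsilon}{1-\epsilon}<0$ (using $\epsilon\in(0,1)$), no endpoint of $\Omega$ can be a maximizer, so $r^\star$ is interior and satisfies $\partial f/\partial r(r^\star,F)=0$ by Fermat's rule; equivalently $\Phi(r^\star)=1-\epsilon$, that is $\int_{c\le r^\star}\bff(c)\,dc=1-\epsilon$. Concavity guarantees conversely that any such stationary point is a global maximizer, and the intermediate value theorem applied to the continuous $\Phi$ with $\Phi(q)=0<1-\epsilon<1=\Phi(Q)$ confirms that at least one such $r^\star\in\Omega$ exists.

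I do not expect a serious obstacle; the only points requiring care are handling the $r$-dependence correctly in the differentiation under the integral sign, and phrasing the concavity and optimality steps so that they do not demand more regularity of $\bff$ than is available (hence arguing concavity via monotonicity of $\partial f/\partial r$ and using one-sided first-order conditions at the endpoints of $\Omega$).
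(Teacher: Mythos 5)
Your proof is correct and follows essentially the same route as the paper: define $G(r)=\int_q^r(r-c)\bff(c)\,dc$, show $G'(r)=\int_{c\le r}\bff(c)\,dc$, and read off concavity and the first-order condition $\int_{c\le r}\bff(c)\,dc=1-\epsilon$ from $\partial f/\partial r=1-\frac{1}{1-\epsilon}\int_{c\le r}\bff(c)\,dc$. The only difference is that the paper outsources the convexity and smoothness of $G$ to Lemma 1 of Rockafellar--Uryasev, whereas you verify these directly and also supply the endpoint/intermediate-value details that the paper leaves implicit.
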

\begin{proof}
Let $G(r) = \int_q^r  (r-c) \bff(c) dc$. From Lemma 1 of  \cite{rockafeller_JR00} $G$ is a convex continuously differentiable function. Using Fundamental theorem of calculus and using differentiation by parts, we can derive $G'(r) = \int_{c\leq r} \bff(c) dc$. This implies $\frac{\partial f}{\partial r} = 1 - \frac{1}{(1-\epsilon)} \int_{c\leq r} \bff(c) dc$, which proves the statement.

\end{proof}

Proposition \ref{prop1} implies that if the distribution of $c$ is known to $F$ and toll setter is interested in finding a toll such that the toll road is used $(100\times \epsilon)$ percent of times in expectation then he should set toll equal to $r$ which satisfies $\int_{c\leq r} \bff(c) dc = 1- \epsilon$. That is, $\epsilon$ should be interpreted as probability that toll road is used at price $r$. In other words, given $F$ when toll-setter decides toll according to (\ref{first_eq}) he indirectly also chooses this probability. This implies that the bi-level problem in (\ref{formu12}) can be written as a parametric single level problem with a max-min objective with parameter $\epsilon$ as follows:
\begin{eqnarray}
&\max_{r\in \Omega;\epsilon\in [0,1]} \min_{F\in D} \hspace{0.5pc}  r  - \frac{1}{1-\epsilon}\int_q^Q \max(r - c,0) \bff(c) dc  \nonumber\\
s.t. & \underline{u} \leq \mu_F(c)\leq \overline{u} \label{formu_new2}\\
& \sigma^2_F(c) \leq \overline{\kappa} \mu_F(c) \nonumber
\end{eqnarray}

%\begin{proof}{Proof}
%Let $\gamma\in [0,1]$, then by definition of a concave function we should have $\gamma f(r_1, C_1) + (1-\gamma) f(r_2, C_2) \leq  f(\gamma (r_1, C_1) + (1-\gamma)(r_2, C_2) )$. From the definition of $f$ we have,
%\begin{eqnarray*}
%\gamma \left[ r_1 - \frac{1}{(1-\epsilon)T}\sum_{i=1}^T \max(r_1 - c^1_i,0) \right] + (1-\gamma) \left[ r_1 - \frac{1}{(1-\epsilon)T}\sum_{i=1}^T \max(r_2 - c^2_i,0) \right] = \nonumber\\
%\gamma r_1 + (1-\gamma r_2)  -  \frac{1}{(1-\epsilon)T} \left[\gamma \sum_{i=1}^T \max(r_1 - c^1_i,0) + (1-\gamma) \sum_{i=1}^T \max(r_2 - c^2_i,0) \right] \leq \nonumber \\
%\gamma r_1 + (1-\gamma r_2) -  \frac{1}{(1-\epsilon)T} \sum_{i=1}^T \max \left[ \gamma r_1 + (1-\gamma r_2)  - (\gamma c^1+i + (1-\gamma) c^2_i) \right]
%\end{eqnarray*}
%\qed
%\end{proof}

For a fixed $F$ the objective function in $f$ is very similar to the concept of Conditional-Value-at-Risk, which has been applied to portfolio optimization problems in \cite{rockafeller_JR00}. In fact, our problem formulation is similar to worst-case conditional value-at-risk studied in \cite{zhu_OR09} and more recently in \cite{toumazis_TS15}.\\

In the rest of the paper we assume time is discretized and we consider the discrete version of (\ref{formu12}) with nature's objective rewritten using fixed support and can be seen as nature optimizing  over samples $C$ drawn from distributions in $D$:

\begin{eqnarray}
\max_{r\in \Omega} & \hspace{0.5pc} r \left[ \frac{1}{T} \sum_{i=1}^T \mathbb{I}_{r\leq c_i} \right] \nonumber\\
 \min_{C\in \Omega^T}   & \hspace{0.5pc}  \left[ r  - \frac{1}{T} \sum_{i=1}^T \max(r - c_i,0) \right] \nonumber\\
s.t.& \underline{u} \leq \mu_F(c)\leq \overline{u} \label{formu3}\\
& \sigma^2_F(c) \leq \overline{\kappa}  \mu_F(c) \nonumber
\end{eqnarray}

A natural way to solve a bilevel problem is to transform it into a single level problem by using optimality conditions and/or using any structure present in the inner problem. Also, the complexity of bilevel problem largely depends on the complexity of the inner or sometimes called follower's or lower-level problem. In next section we study the inner problem of (\ref{formu3}).

\subsection{Inner/Nature's problem}\label{inner_prob}

For a fixed  value of toll price $r$ the inner problem in (\ref{formu3}) is a minimization problem with a concave objective function. Concave minimization problem are hard to solve, for some recent work on minimizing quasi-concave minimization over convex sets see \cite{goyal_orl2013} and references therein. To solve the inner problem in (\ref{formu3}) we reformulate the inner problem as the following non-convex integer programming problem by introducing additional variables:

\begin{eqnarray}
\min_{C\in \Omega^T} &  \left[ r  - \frac{1}{T} \sum_{i=1}^T z_i \right] \label{formu41}\\
s.t.& \underline{u} \leq \mu(c)\leq \overline{u} \label{formu41_1} \\
& \sigma^2(c) \leq \overline{\kappa}  \mu(c)    \label{formu41_2}\\
& c_i - r + z_i \geq 0 \hspace{0.5pc} i=1,\ldots, T   \label{formu41_4}\\
& r - c_i + My_i \geq 0 \hspace{0.5pc} i=1,\ldots, T    \label{formu41_5}\\
& z_i \leq M(1-y_i) \hspace{0.5pc} i=1,\ldots, T     \label{formu41_6}\\
& z_i - (r-c_i)(1-y_i) \leq 0 \hspace{0.5pc} i=1,\ldots, T     \label{formu41_7}\\
& Y\in \{0,1\}; C,Z \geq 0   \label{formu41_8}
\end{eqnarray}

Note that for $M$ in the above formulation any value greater than or equal to $Q$ suffices which gives our next theorem:
\begin{thm}
For a fixed $r$, (\ref{formu41})-(\ref{formu41_8}) is a valid reformulation of the inner problem of (\ref{formu3}).
\end{thm}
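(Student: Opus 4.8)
The plan is to prove the claimed equivalence by showing that, after projecting out the auxiliary variables $Y$ and $Z$, the feasible region of \eqref{formu41}--\eqref{formu41_8} coincides with the feasible region of the inner problem of \eqref{formu3}, and that on this common region the two objectives agree. The crux is the following claim: for every $(C,Y,Z)$ feasible for \eqref{formu41}--\eqref{formu41_8} one necessarily has $z_i=\max(r-c_i,0)$ for all $i$, so the value of \eqref{formu41} equals $r-\frac1T\sum_{i=1}^T\max(r-c_i,0)$, i.e. the inner objective of \eqref{formu3} evaluated at $C$.

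First I would record the (purely bookkeeping) point that, in the discrete setting, $\mu(c)$ and $\sigma^2(c)$ in \eqref{formu41_1}--\eqref{formu41_2} are the empirical mean and variance of the sample $C=(c_1,\dots,c_T)$, which are exactly the quantities $\mu_F$ and $\sigma^2_F$ attached to the empirical distribution of $C$ in \eqref{formu3}; thus \eqref{formu41_1}--\eqref{formu41_2} impose precisely the constraint set on $C$ that the inner problem of \eqref{formu3} imposes, while \eqref{formu41_4}--\eqref{formu41_8} involve only the new variables and never cut $C$ further. Hence the projection of the MIP feasible set onto $C$-space is contained in the inner feasible set of \eqref{formu3}.

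Next comes the core step. Fix a feasible $(C,Y,Z)$ and an index $i$. From \eqref{formu41_4} together with $Z\ge 0$ we get $z_i\ge\max(r-c_i,0)$. For the matching upper bound I split on $y_i\in\{0,1\}$: if $y_i=1$, then \eqref{formu41_6} forces $z_i\le 0$, so $z_i=0$, and the lower bound then forces $r-c_i\le 0$, giving $z_i=0=\max(r-c_i,0)$; if $y_i=0$, then \eqref{formu41_5} reads $r-c_i\ge 0$ and \eqref{formu41_7} reads $z_i\le r-c_i$, which with the lower bound gives $z_i=r-c_i=\max(r-c_i,0)$. Either way $z_i=\max(r-c_i,0)$, so the MIP objective equals the inner objective of \eqref{formu3} at $C$. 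Conversely, given any $C$ feasible for the inner problem of \eqref{formu3}, I would complete it to a feasible MIP point by setting $y_i=1$ when $c_i\ge r$ and $y_i=0$ otherwise, and $z_i=\max(r-c_i,0)$; then \eqref{formu41_4}, \eqref{formu41_7} hold with equality, and \eqref{formu41_5}--\eqref{formu41_6} hold because any $M\ge Q$ dominates $c_i-r$ and $r-c_i$ (here $c_i\le Q$ and $r\ge q\ge 0$), which is exactly why ``$M\ge Q$ suffices''. Combining the two directions: the projection of the MIP feasible set onto $C$-space equals the inner feasible set, and on it the objectives are identical, so the optimal values coincide and optimal samples $C$ correspond — the asserted validity of the reformulation.

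I do not expect a genuine obstacle here; the work is in the case analysis of the core step, specifically handling the boundary $c_i=r$ (where $y_i$ may take either value but still $z_i=0$) and confirming that no feasible $(C,Y,Z)$ can inflate $z_i$ above $\max(r-c_i,0)$ by a clever choice of $y_i$. The only other point needing an explicit, if routine, check is that the big-$M$ bound $M\ge Q$ is large enough in \eqref{formu41_5}--\eqref{formu41_6} so that the two directions of the argument go through simultaneously.
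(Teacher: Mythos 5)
Your proposal is correct and follows essentially the same route as the paper: the paper's (one-line) proof simply asserts that constraints (\ref{formu41_4})--(\ref{formu41_6}) force the correct values of $y_i$ and that (\ref{formu41_6})--(\ref{formu41_7}) then force $z_i=\max(r-c_i,0)$, which is exactly the case analysis you carry out in detail, together with the routine converse construction and the $M\ge Q$ check. Your write-up is a more careful, fully justified version of the same argument.
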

\begin{proof}
Constraints (\ref{formu41_4}) - (\ref{formu41_6}) ensure that $y_i=0$ when $r> c_i$ and $y_i=1$ otherwise, and (\ref{formu41_6})- (\ref{formu41_7}) ensure $z_i = \max[r - c_i, 0]$.
\end{proof}
The only non-convex constraint apart from integrality constraints in the above formulation is (\ref{formu41_7}). We linearize this by introducing two additional sets of variables as follows. Replace the product terms $ry_i$ and $c_iy_i$ in this constraint by variables $u_i$ and $v_i$ and then add constraints (\ref{formu4_8})-(\ref{formu4_13}). After doing this we get the following convex integer programming problem.

\begin{eqnarray}
\min_{C\in \Omega^T} &  \left[ r  - \frac{1}{T} \sum_{i=1}^T z_i \right]  \label{formu4}\\
s.t.& \underline{u} \leq \mu(c)\leq \overline{u}  \label{formu4_1}\\
& \sigma^2(c) \leq \overline{\kappa}  \mu(c)   \label{formu4_2}\\
& c_i - r + z_i \geq 0 \hspace{0.5pc} i=1,\ldots, T    \label{formu4_4}\\
& r - c_i + My_i \geq 0 \hspace{0.5pc} i=1,\ldots, T    \label{formu4_5}\\
& z_i \leq M(1-y_i) \hspace{0.5pc} i=1,\ldots, T       \label{formu4_6}\\
& z_i - r + c_i + u_i - v_i \leq 0 \hspace{0.5pc} i=1,\ldots, T     \label{formu4_7}\\
& u_i \leq My_i \hspace{0.5pc} i=1,\ldots, T     \label{formu4_8}\\
& v_i \leq My_i\hspace{0.5pc} i=1,\ldots, T     \label{formu4_9}\\
& v_i \leq c_i \hspace{0.5pc} i=1,\ldots, T      \label{formu4_10}\\
& u_i \leq r \hspace{0.5pc} i=1,\ldots, T     \label{formu4_11}\\
& r - M(1-y_i) \leq u \leq r \hspace{0.5pc} i=1,\ldots, T     \label{formu4_12}\\
& Y\in \{0,1\}; C,Z,U,V \geq 0 \label{formu4_13}
\end{eqnarray}
\begin{thm}
(\ref{formu4})-(\ref{formu4_13}) is a valid reformulation of (\ref{formu41})-(\ref{formu41_8}).
\end{thm}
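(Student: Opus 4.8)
The plan is to prove the two mixed‑integer programs equivalent by showing that their feasible sets coincide once the auxiliary variables $U=(u_i)_i$ and $V=(v_i)_i$ are projected out, the objective $r-\frac1T\sum_i z_i$ being literally the same in both. Since constraints (\ref{formu4_1})--(\ref{formu4_6}) are identical to (\ref{formu41_1})--(\ref{formu41_6}) (and (\ref{formu4_13}) just adds $U,V\ge 0$ to (\ref{formu41_8})), everything comes down to checking that, for each $i$, the single non‑convex constraint (\ref{formu41_7}), i.e.\ $z_i\le (r-c_i)(1-y_i)$, is equivalent — in the presence of the remaining constraints — to the block (\ref{formu4_7})--(\ref{formu4_12}) obtained by substituting $u_i=ry_i$ and $v_i=c_iy_i$ into the expanded form $z_i-r+c_i+ry_i-c_iy_i\le 0$.

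First I would dispose of the straightforward linearization $u_i=ry_i$. Using that any admissible big‑$M$ satisfies $M\ge Q\ge r$ (the observation, recorded just before the first reformulation theorem, that any $M\ge Q$ is admissible), the constraints $u_i\le My_i$, $r-M(1-y_i)\le u_i\le r$ and $u_i\ge 0$ force $u_i=0$ when $y_i=0$ and $u_i=r$ when $y_i=1$, i.e.\ $u_i=ry_i$ exactly. The linearization of $v_i=c_iy_i$ is where the argument needs care, since the linking inequalities $v_i\le My_i$, $v_i\le c_i$, $v_i\ge0$ by themselves only enforce $0\le v_i\le c_iy_i$. When $y_i=0$ they still give $v_i=0=c_iy_i$. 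When $y_i=1$ I would invoke the coupling with $z_i$: (\ref{formu4_7}) with $u_i=r$ reduces to $z_i\le v_i-c_i$, and since $v_i\le c_i$ this yields $z_i\le 0$; together with $z_i\ge 0$ we obtain simultaneously $z_i=0$ and $v_i=c_i=c_iy_i$. Hence every feasible point of (\ref{formu4})--(\ref{formu4_13}) has $u_i=ry_i$ and $v_i=c_iy_i$, and plugging these back makes (\ref{formu4_7}) read exactly $z_i\le (r-c_i)(1-y_i)$, so the $(C,Y,Z)$‑projection of the feasible set of (\ref{formu4})--(\ref{formu4_13}) is contained in that of (\ref{formu41})--(\ref{formu41_8}).

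For the reverse inclusion I would start from an arbitrary feasible $(C,Y,Z)$ of (\ref{formu41})--(\ref{formu41_8}), set $u_i:=ry_i$ and $v_i:=c_iy_i$, and verify (\ref{formu4_7})--(\ref{formu4_13}) directly; each verification is a one‑line consequence of $0\le c_i\le Q\le M$, $0\le r\le Q\le M$ and $y_i\in\{0,1\}$, and (\ref{formu4_7}) holds because under this substitution it is the very inequality (\ref{formu41_7}). The two inclusions give that the feasible sets agree on the common variables; as the objective depends only on $(C,Z)$ and is unchanged, the optimal values coincide and any optimizer of one problem yields an optimizer of the other (up to the determined values of $U,V$), which is exactly the asserted validity of the reformulation.

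The step I expect to be the main obstacle is precisely the treatment of $v_i$ in the case $y_i=1$: the block (\ref{formu4_8})--(\ref{formu4_12}) is \emph{not} the full McCormick envelope of $v_i=c_iy_i$, as the bound $v_i\ge c_i-M(1-y_i)$ is absent, so one must argue that this ``missing'' inequality is nonetheless implied on the feasible set by (\ref{formu4_7}) together with $z_i\ge 0$, and hence that the relaxation neither discards feasible points nor admits spurious ones. Once that point is settled, the remaining verifications are routine bookkeeping.
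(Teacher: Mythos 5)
Your proof is correct, and it is in fact more complete than the one in the paper. The paper's argument consists only of your second (easy) inclusion: starting from a feasible point of (\ref{formu41})--(\ref{formu41_8}), set $u_i=r$, $v_i=c_i$ when $y_i=1$ and $u_i=v_i=0$ otherwise (i.e.\ exactly your substitution $u_i=ry_i$, $v_i=c_iy_i$) and observe feasibility in (\ref{formu4})--(\ref{formu4_13}). What the paper leaves unsaid, and what you supply, is the converse: that the linearized problem admits no spurious feasible points on which the objective could be strictly smaller. This direction is genuinely needed for the theorem as stated (a minimization reformulation that only preserves old solutions could still have a lower optimum), and it is exactly where your observation about the missing McCormick inequality $v_i\ge c_i-M(1-y_i)$ earns its keep: you correctly recover it from (\ref{formu4_7}) with $u_i=ry_i$, which gives $v_i\ge z_i+c_i-r(1-y_i)$, so that for $y_i=1$ the chain $c_i\le z_i+c_i\le v_i\le c_i$ forces both $v_i=c_i$ and $z_i=0$, while for $y_i=0$ the constraints $0\le v_i\le My_i$ already pin $v_i=0$ and (\ref{formu4_7}) collapses to $z_i\le r-c_i$, matching (\ref{formu41_7}). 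So the projected feasible sets coincide and the objectives agree. In short: same substitution as the paper, but you close the logical gap the paper's one-line proof leaves open.
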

\begin{proof}
To see this is true note that for every solution to (\ref{formu41})-(\ref{formu41_8}) we can create an equivalent solution to (\ref{formu4})-((\ref{formu4_13})) by taking the $C$, $Z$, $Y$ values as they are and putting $u_i=r$ and $v_i=c_i$ for every $i$ with $y_i=1$ and $0$ otherwise.
\end{proof}
For a fixed $r$, (\ref{formu4}) - (\ref{formu4_13}) can be solved using a state of the art commercial solver like CPLEX and more specialized algorithms are also conceivable owing to tremendous success and availability of techniques for solving convex quadratic integer programs in last few years. We will now look at solving the toll setting problem.

\subsection{Solution Algorithms}\label{soln_algs}

We will first give an exact algorithm  which adds an additional constraint to (\ref{formu4}) - (\ref{formu4_13}) which can be given to a solver like CPLEX; then we move on to simple heuristic.
\subsubsection{Exact Algorithm}
Proposition \ref{prop1} implies that for a fixed $\epsilon\in  \{\frac{1}{T}, \frac{2}{T},\ldots, 1\}$ we can add the constraint (\ref{usage_cons}) to (\ref{formu4}) - (\ref{formu4_13}) and solve the inner problem treating $r$ as variable. This will give us a maximum price to have toll road used $\epsilon T$ times. We will formally give this in Algorithm \ref{inner_formu}.
\begin{equation}\label{usage_cons}
\sum_i \frac{y_i}{T}\leq \epsilon
\end{equation}

\begin{algorithm} [hbtp]
\caption{\label{inner_formu} Robust Toll Algorithm}

\begin{algorithmic}
\STATE{INPUT:  $\underline{u}$, $\overline{u}$, $\overline{\kappa}$}
\STATE{$R=\phi$}
\FOR{$\epsilon \in \{\frac{1}{T}, \frac{2}{T},\ldots, 1\} $}
\STATE   Solve (\ref{formu4}) - (\ref{formu4_13}) + $\frac{\sum_i y_i}{T} \leq \epsilon$
\STATE{Let $r_{\epsilon}$ be solution output; $r_{\epsilon}\in R$}
\ENDFOR
\STATE{OUTPUT: r = $\mbox{arg max}_{{\epsilon}} \epsilon r_{\epsilon}$}
\end{algorithmic}
\end{algorithm}

%\begin{algorithm}

%\caption{\label{inner} Core($\hat{r}$)}

%\begin{algorithmic}
%\STATE{INPUT: $\hat{r}$, $\underline{u}$, $\overline{u}$, $\overline{\kappa}$}
%\FOR{$\epsilon \in [0,1] $}
%\STATE   Solve (\ref{formu4}) - (\ref{formu4_13}) with $r = \hat{r}$ for all $\epsilon\in [0,1]$,
%\hspace{1pc} \IF{$\frac{\sum_i y_i}{T} \geq \epsilon$}
%\STATE  revenue($\hat{r}$) = $\hat{r} \times  \epsilon$
%\STATE break
%\ELSE
%\STATE revenue($\hat{r}$) = 0
%\ENDIF
%\ENDFOR
%\STATE Output revenue($\hat{r}$).
%\end{algorithmic}
%\end{algorithm}

\subsubsection{Two-point Heuristic}\label{2pt}

The formulation given in (\ref{formu4})-(\ref{formu4_13}) can be hard to solve and can be time consuming when using the generic solvers like CPLEX. Of course, one can derive efficient algorithms using branch and bound and/or other methodologies. In this section, however, we focus on constructing a simple approximate solution to toll pricing problem. In our computational experience of solving (\ref{formu4}) using CPLEX we found that in all cases the solution found has two-point support. That is, the vector of costs returned by CPLEX has exactly two distinct values. If we restrict to the distributions with two-point support $\{\ell,u\}$, assuming $\ell\leq r \leq u$, the nature's problem (\ref{formu4})-(\ref{formu4_13}) can be written as

\begin{align*}
\min_{\ell\in \Omega,u \in \Omega;\lambda\in [0,T]} &  r (T-\lambda) +  \ell\lambda   \nonumber\\
s.t.& (T-\lambda) u + \lambda \ell = \mu T  \nonumber\\
& \underline{u} \leq \mu \leq \overline{u} \nonumber \\
& \lambda(\ell- \mu)^2 + (T-\lambda) (u - \mu)^2 \leq \overline{\kappa} \mu (T-1)
\end{align*}

Suppose that we fix  $\mu = \underline{u}$, we can write the problem of finding of $\{\ell,u\}$ as

\begin{align*}
\min_{\ell\in \Omega,u \in \Omega;\lambda\in [0,T]} &  r (T-\lambda) +  \ell\lambda \nonumber\\
s.t.& (T-\lambda) u + \lambda \ell = \mu T \nonumber \\
& \lambda(\ell- \mu)^2 + (T-\lambda) (u - \mu)^2 \leq \overline{\kappa} \mu (T-1) \nonumber
\end{align*}

Eliminating $u$, we get

\begin{align}
\min_{\ell\in \Omega \lambda\in [0,T]} &   r T -  \lambda (r - \ell)  \label{twopt}\\
s.t. & \lambda(\ell- \mu)^2 + (T-\lambda) ( [\frac{\mu T - \lambda \ell}{(T-\lambda)}]- \mu)^2 \leq \overline{\kappa} \mu (T-1) \nonumber
\end{align}

For a fixed $\lambda$, the objective function in (\ref{twopt}) is linear in $\ell$ with a positive slope. This implies that the solution to (\ref{twopt}) simply is the lowest value satisfying the inequality in (\ref{twopt}) and $u\in \Omega$. Using this observation we now give a simple algorithm for finding a two-point solution to (\ref{formu4}). Hereafter we will denote the integers in $\Omega$ by $\bar{\Omega}$.

\begin{algorithm}[hbtp]

\caption{\label{inner_formu2} Two Point Algorithm}

\begin{algorithmic}
\STATE {INPUT: $\underline{u}$, $\overline{\kappa}$ }
\STATE {Set $BR(r)=0$ for all $r\in \bar{\Omega}\backslash {\underline{u}}$;$BR(\underline{u})=1$}
\FOR{$r\in \bar{\Omega}$ }
\STATE $\lambda = T-1$, $\mu = \underline{u}$
\WHILE{$\lambda \geq 1$}
\STATE $\ell = 0$
\WHILE{$\ell < \mu$}
\STATE $u = \frac{((\mu \times T) - (\lambda \times \ell))}{(T-\lambda)}$
\IF{$u\leq Q$ and $\lambda(\ell- \mu)^2 + (T-\lambda) ( [\frac{\mu T - \lambda \ell}{(T-\lambda)}]- \mu)^2 \leq \overline{\kappa} \mu (T-1)$}
\IF{$obj > (\lambda\times \ell + (T-\lambda)\times r)$}
\STATE $obj = (\lambda\times \ell + (T-\lambda)\times r)$
\STATE {$BR(r) = r\times (T-\lambda)$}
\ENDIF

\STATE break
\ELSE
\STATE $\ell = \ell + 1$
\ENDIF

\ENDWHILE

\STATE $\lambda = \lambda - 1$
\ENDWHILE
\ENDFOR

\STATE {OUTPUT: $\mbox{argmax}_{r\in \bar{\Omega}} BR(r)$}

\end{algorithmic}
\end{algorithm}

Algorithm \ref{inner_formu2} approximately solves by searching for all values of $\lambda$, where $obj$ is the objective in (\ref{twopt}). Note we search for $\ell \in \bar{\Omega}$, this is for numerical simplification and moreover given that $r\in \bar{\Omega}$ this will only result in minor loss in terms of approximation. Note that the heuristic presented in Algorithm \ref{inner_formu2} is aimed mainly for a quick solution where we fix $\mu = \underline{u}$ and $\kappa = \overline{\kappa}$ and the run-time complexity of algorithm is $O(|\bar{\Omega}|^2)$. We now prove that the optimal solution to the nature's problem for a fixed toll is indeed a two-point distribution.

\begin{thm}
The optimal solution to nature's problem is always a two-point distribution.
\end{thm}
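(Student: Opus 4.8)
The plan is to read nature's problem, for a fixed toll $r$, as a \emph{moment problem}: nature picks a distribution $F$ on $\Omega=[q,Q]$ minimising the linear functional $\mathbb{E}_F[\min(c,r)]$ (this is exactly $r-\int_q^r(r-c)\,dF(c)$, the rewritten nature's objective obtained above), subject to constraints that involve only the first two moments of $F$. The single obstruction to this being a \emph{linear} program is that $\sigma^2_F\le\overline\kappa\,\mu_F$ carries $\mu_F$ on the right-hand side, so I would remove that first. Let $F^*$ be an optimal distribution and put $\mu^*:=\mu_{F^*}\in[\underline u,\overline u]$. Then $F^*$ remains optimal for the \emph{pinned} problem
\[
\min_F\ \mathbb{E}_F[\min(c,r)]\quad\text{s.t.}\quad \mu_F=\mu^*,\ \ \mathbb{E}_F[c^2]\le(\mu^*)^2+\overline\kappa\,\mu^*,\ \ \mathrm{supp}\,F\subseteq[q,Q],
\]
because any $F$ feasible here has mean $\mu^*\in[\underline u,\overline u]$ and variance at most $\overline\kappa\,\mu^*$, hence is feasible for the original problem, over which $F^*$ is optimal. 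The pinned problem is an honest linear program over measures with three linear constraints (normalisation, mean, second moment), so standard moment-problem duality applies.

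I would then pass to its dual: maximise $y_0+y_1\mu^*+y_2\big((\mu^*)^2+\overline\kappa\,\mu^*\big)$ over $y_0,y_1\in\mathbb{R}$ and $y_2\le0$, subject to $P(c):=y_0+y_1c+y_2c^2\le\min(c,r)$ for all $c\in[q,Q]$. One may assume $F^*$ has at least three atoms (otherwise there is nothing to prove), so its moment data lies in the interior of the moment region and strong duality together with complementary slackness hold. Slackness then forces $\mathrm{supp}\,F^*\subseteq\{c\in[q,Q]:P^*(c)=\min(c,r)\}$ for an optimal dual quadratic $P^*$ with nonpositive leading coefficient, so it suffices to prove the one-variable statement: a quadratic $P$ with $P(c)\le\min(c,r)$ on $[q,Q]$ and nonpositive leading coefficient can agree with $\min(c,r)$ in at most two points.

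This contact lemma is the heart of the proof. The graph of $\min(c,r)$ is the line $c\mapsto c$ on $[q,r]$ and the constant $r$ on $[r,Q]$, joined by a concave kink at $c=r$. On $[q,r]$ the difference $c-P(c)$ is a quadratic with leading coefficient $-y_2\ge0$ that is $\ge0$ throughout; if it vanished at two points it would be strictly negative between them --- contradiction --- so $P$ meets the first piece at most once, and the same argument on $[r,Q]$ gives at most one contact there. Finally $P$ cannot touch at $c=r$: equality there together with $P\le\min(c,r)$ forces $P'(r)\ge1$ (comparing with $c$ on the left) and $P'(r)\le0$ (comparing with the constant $r$ on the right), which is impossible. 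Hence there are at most two contact points, $\mathrm{supp}\,F^*$ has at most two elements, and $F^*$ is a two-point distribution.

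I expect the main obstacle to be exactly this lemma, and the delicate point is at the endpoints of each affine piece: a nonnegative quadratic on an interval may well vanish at \emph{both} endpoints (e.g.\ $\pm(c-q)(c-r)$), and it is the sign condition $y_2\le0$ from dual feasibility that excludes this; one must also treat the degenerate cases ($P$ affine, $F^*$ already supported on at most two points, or $r=Q$) separately. A duality-free alternative is to quote the Richter--Rogosinski theorem, which already yields an optimal $F^*$ with at most three atoms, and then merge three atoms into two by an exchange argument using concavity of $\min(c,r)$; this route is more computational, and it is worth noting that the piecewise-linearity of $\min(c,r)$ is essential --- for a strictly convex integrand the two-moment problem can genuinely require a three-point optimum.
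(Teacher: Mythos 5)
Your argument is correct, and it takes a genuinely different route from the paper's. The paper does exactly what you sketch at the end as the ``duality-free alternative'': it invokes a reduction result (citing Popescu) to restrict attention to an optimal distribution supported on three points $r_1<r_2<r_3$ with weights $p_1,p_2,p_3$, merges the two lower atoms into a single atom $z\in(r_1,r_2)$ at their conditional mean (in the spirit of Birge--Dula), checks that this preserves the mean and does not increase the second moment (so the variance constraint stays feasible), and then verifies $\mathbb{E}[g(c)]$ does not increase for $g(x)=\min(x,r)$ by a case analysis on where $r$ sits relative to $r_2$. You instead pin the mean, dualize the resulting generalized moment problem, and localize the support of any optimal measure inside the contact set of a concave quadratic minorant of $\min(c,r)$, which your contact lemma caps at two points (one per affine piece, none at the concave kink, precisely because the dual quadratic is concave). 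What each approach buys: the paper's merge is elementary and constructive but leans on two cited reduction theorems, and its case analysis (especially the case $r_2>r$, which the paper dispatches with an argument about the toll-setter's upper-level response rather than the fixed-$r$ inner problem) is the fragile part; your duality argument is self-contained, explains structurally why the answer is two points, and applies to optimal measures that are not a priori finitely supported. The technical debts you flag -- interiority of the moment data for strong duality and dual attainment, Slater for the pinned variance constraint (witnessed by the point mass at $\mu^*$), and the degenerate cases $y_2=0$ and $r\ge Q$ -- are the right ones and are dischargeable by standard moment-problem duality in the Isii/Karlin--Studden framework; your closing observation that a strictly convex integrand would genuinely require three support points is a useful check that the piecewise-linearity of $\min(c,r)$ is doing real work in both proofs.
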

\begin{proof}
From \cite{popescu_OR07} it is enough to consider three-point distributions.
In \cite{birge_dula_AOR1991}, it is shown that a two point distribution $(z,r_3)$ with the same mean and variance as the three point distribution can be found, where $r_1 <z< r_2$. Suppose that the optimal three point distribution is $3D = r_1$, $r_2$, and $r_3$ with probabilities $p_1$, $p_2$ and $p_3$. Now consider a two-point distribution $2D = (z,r_3)$ with probabilities $(p_1+p_2,p_3)$ with $r_1 <z< r_2$ with the same mean and variance at most that of 3D. \\

The variance of 2D is at most that of 3D. To see this note that the following inequality is true:

\begin{equation}
\frac{p_1(z-r_1)}{p_2(r_2-z)} \leq \frac{r_2+z}{r_1 + z},
\end{equation}
LHS is equal to 1 due to equivalence of means and RHS is clearly $\geq 1$.
Rearranging terms and adding $p_3r_3^2$ on both sides we get,
\begin{equation}
(p_1+p_2) z^2 + p_3 r_3^2 \leq p_1 r_1^2 + p_2 r_2^2 + p_3 r_3^2.
\end{equation}

To show that $2D$ has a lower expected cost we need to show that
\begin{equation}\label{2dclaim}
(p_1+p_2) g(z) + p_3g(r_3) \leq  p_1g(r_1)+p_2g(r_2)+p_3g(r_3)
\end{equation}
where $g(x) = r$ if $x\geq r$ and $x$ otherwise. \\

Consider first the case when $r=r_2$. From the equivalence of means we have
\begin{equation}
p_1(z-r_1) = p_2(r-z)
\end{equation}
rearranging and adding $p_3r$ on both sides we get
\begin{equation}
(p_1+p_2) z + p_3r = p_1r_1+p_2r_2+p_3r
\end{equation}
which is (\ref{2dclaim}).

The case when $r_2<r$ follows using a similar reasoning as above. Now consider the case when $r_2> r$. It is enough to observe that increasing $r$ to $r_2$ will not change nature's optimal action 3D, in which case setting $r$ to $r_2$ gives improved revenues to toll setter with same nature's response.
\end{proof}

\section{Characteristics of UFN model}\label{char_UFN}
In this section we study the characteristics of UFN model. In Section \ref{ufn_vs_an} we compare the performance of UFN tolls with AN tolls  and illustrate how the optimal solutions of nature's problem react to change in tolls.  Before that we first discuss the model with discrete set of costs.

\subsection{Discrete costs}
Given that we are inspired mainly by toll prices it should be noted that in many cases prices usually are not any continuous values. Instead, prices are almost always numbers with up to two significant digits. This motivates us to consider the case when $\Omega$ is discrete set of prices with minimum price equal to $q$ and maximum equal to $Q$. We will define $\Omega = \{c_j: j\in J \}$, where $J$ is the index set of all allowed prices.  In this case toll pricing problem can be formulated as following:

\begin{eqnarray}
  & \max_{r\in {\Omega}} \hspace{0.5pc}  \sum_{j:c_j\geq r} x_j r \\
 & \min_{X} \sum_{j:c_j\geq r} x_j r + \sum_{j:c_j<r} x_jc_j\\
s.t. & \sum_{j\in J} x_j =1\\
& \sum_{j\in J} c_jx_j = \mu \\
& \sum_{j\in J} x_j c_j^2 - \mu^2 \leq \kappa \mu
\end{eqnarray}

Where $x_j$ is the probability mass assigned to price $c_j\in \Omega$, first constraint corresponds to the mean (first moment) constraint and second constraint to the bounded variability. In this case the nature's problem is simply a linear program, which is easy to solve. Also, using linear programming theory it is easy to note that the optimal solution has at most three support points. However, in numerical experiments we found that in many cases it is either a two point distribution or it is three point distribution with two support points very close to each other. We now use this formulation to illustrate some characteristics of UFN model.

\subsection{AN vs UFN}\label{ufn_vs_an}
In this section we empirically show how UFN model is better and less conservative compared to the usual robust approach of AN model. To compare the behavior of these two models we solved the nature's problem of both models for a discrete $\Omega$ by varying tolls. In AN model, the distribution found is always such that, the lower support point of distribution is very close to the set toll price. Resulting in very low revenues. Under such model toll setter will be forced to set the toll to lowest price for given moment constraints. On the other hand UFN model exhibits a concave like revenue curve. That is, as toll increases revenues may increase up to a point and then decreases. This behavior is illustrated in Figure \ref{fig:usr_fndly_vs_advy} where numerical experiment done has $\Omega = [0,1000]$, with tolls changed between $350$ and $500$, with $\mu = 500$ and $\kappa =60$.
\begin{figure}[ht]
    \centering
   {\includegraphics[width=100mm, height=50mm]{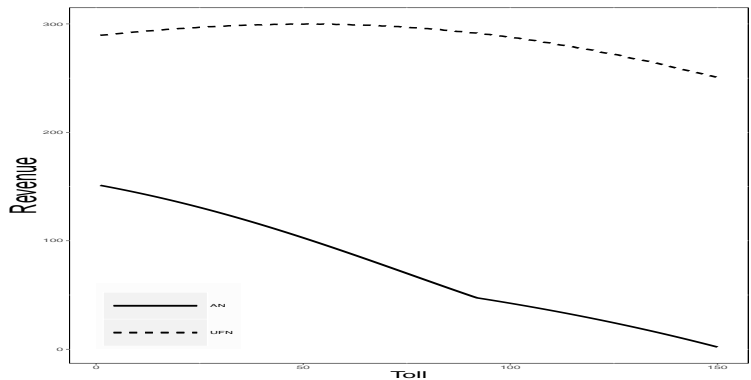} }%
	\caption{Comparison of revenues for varying tolls between user-friendly and adversarial nature}%
    \label{fig:usr_fndly_vs_advy}%
\end{figure}
Figure \ref{fig:UFN} illustrates the changes in the lower support point, revenue and nature's objective values for the UFN two point distribution as the toll is increased with the same parameters as for Figure \ref{fig:usr_fndly_vs_advy}.
\begin{figure}[ht]
    \centering
   {\includegraphics[width=100mm, height=75mm]{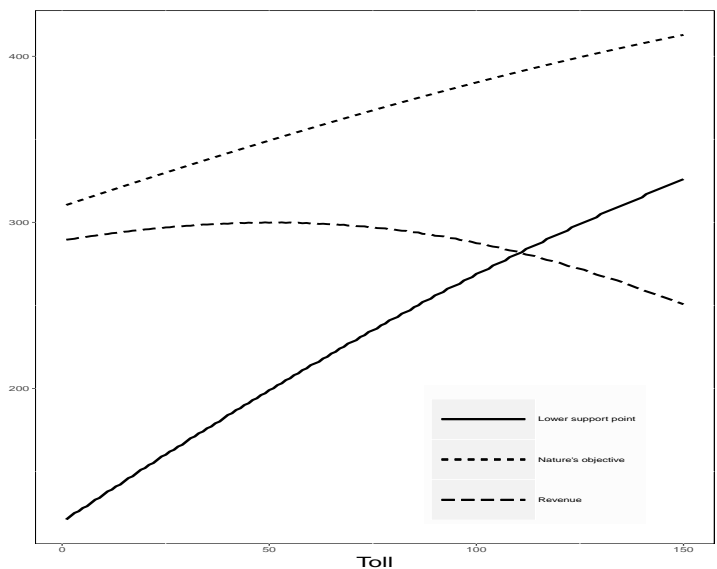} }%
	\caption{Change in two-point distribution as toll increases}%
    \label{fig:UFN}%
\end{figure}

\subsection{Impact of $\kappa$}
As mentioned in Section\ref{method}, $\bar{\kappa}$ indicates the belief of the toll setter about the level of uncertainty. In other words a higher value of $\bar{\kappa}$ compared to $\kappa$ indicates that the toll setter is overly pessimistic. This implies he will try to set a lower toll and hence will have lesser revenue. This is reflected in the illustration given in Figure \ref{fig:toll_vs_var} where we plot revenue from optimal two point UFN distribution as toll and $\bar{\kappa}$  values are changed with mean kept constant. The vertical line of revenues against each toll value correspond to different values of $\bar{\kappa}$ with top values corresponding to smaller $\bar{\kappa}$ . The behavior of the revenue curve is such that the maximizing point (w.r.t the two point distribution found) shifts to the left when $\bar{\kappa}$ value is higher as the toll increases. For the numerical experiment used in Figure \ref{fig:toll_vs_var} the parameters were set to $\Omega = [0,1000]$, tolls changed between $350$ and $500$, $\mu = 500$ and $\bar{\kappa} =[5,60]$. It is also worth noting, with toll closer to mean the variation in the revenue is very less with varying $\bar{\kappa}$.

\begin{figure}[ht]
    \centering
   {\includegraphics[width=100mm, height=75mm]{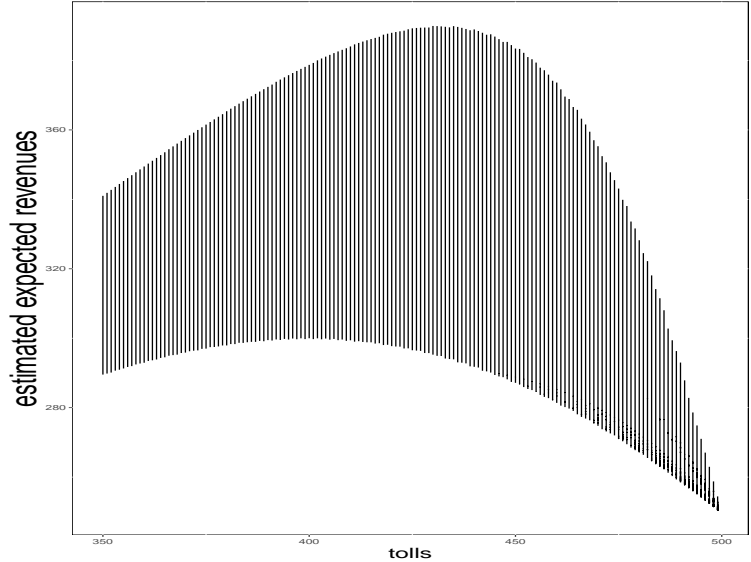} }%
	\caption{Change in two-point distribution as toll increases}%
    \label{fig:toll_vs_var}%
\end{figure}

\subsection{Normal Distribution - Entropy}

In order to assess the performance of UFN model we compared the distribution calculated in the lower level to the case when the distribution of $c$ is Normal with same mean and variance. Figure \ref{fig:2pt_vs_normal} gives the change in cumulative probabilities, for a fixed toll with same mean but changing variance, of two-point distribution calculated in nature's problem versus the actual Normal distribution.The mean is set to $500$ with variance changing between $10$ and $60$ with support in $[0,1000]$. The tolls are set at 300, 350 and 400 for Figures (\ref{fig:300})-(\ref{fig:400}) respectively. The plots give cumulative probabilities in both cases, that is cumulative probability in both distributions at toll value. More formally, plots give $P_{UFN} (c \leq r)$ and $P_{\mathbb{N}}(c\leq r)$ as variance is changed for three different values of r.  It can be seen that two point distributions calculated are very close to the normal case. Furthermore, the distribution gets better when the toll value is increased from 300 to 350, and again the distance between curves increases with increase in toll away from 350.  This indicates the UFN approach will give close to optimal tolls when the distribution of $c$ is Normal given that toll setter has a correct belief about the $\bar{\kappa}$.
\begin{figure}[htbp]
    \centering
	\begin{subfigure}[b]{0.45\textwidth}
		\centering
   \includegraphics[width=\textwidth]{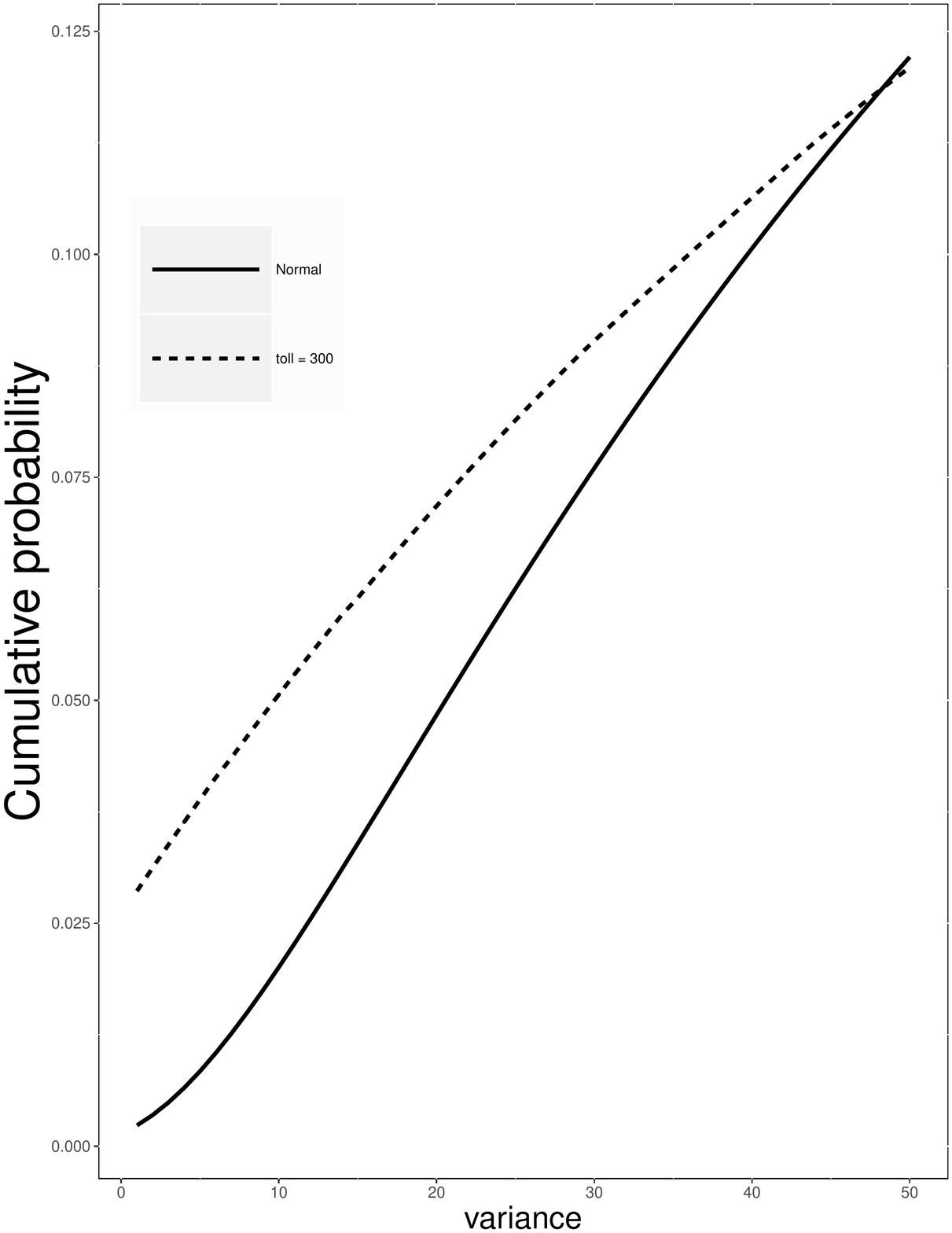} %
	\caption{toll=300}%
	\label{fig:300}
	\end{subfigure}
	~
	\begin{subfigure}[b]{0.45\textwidth}
		\centering
   \includegraphics[width=\textwidth]{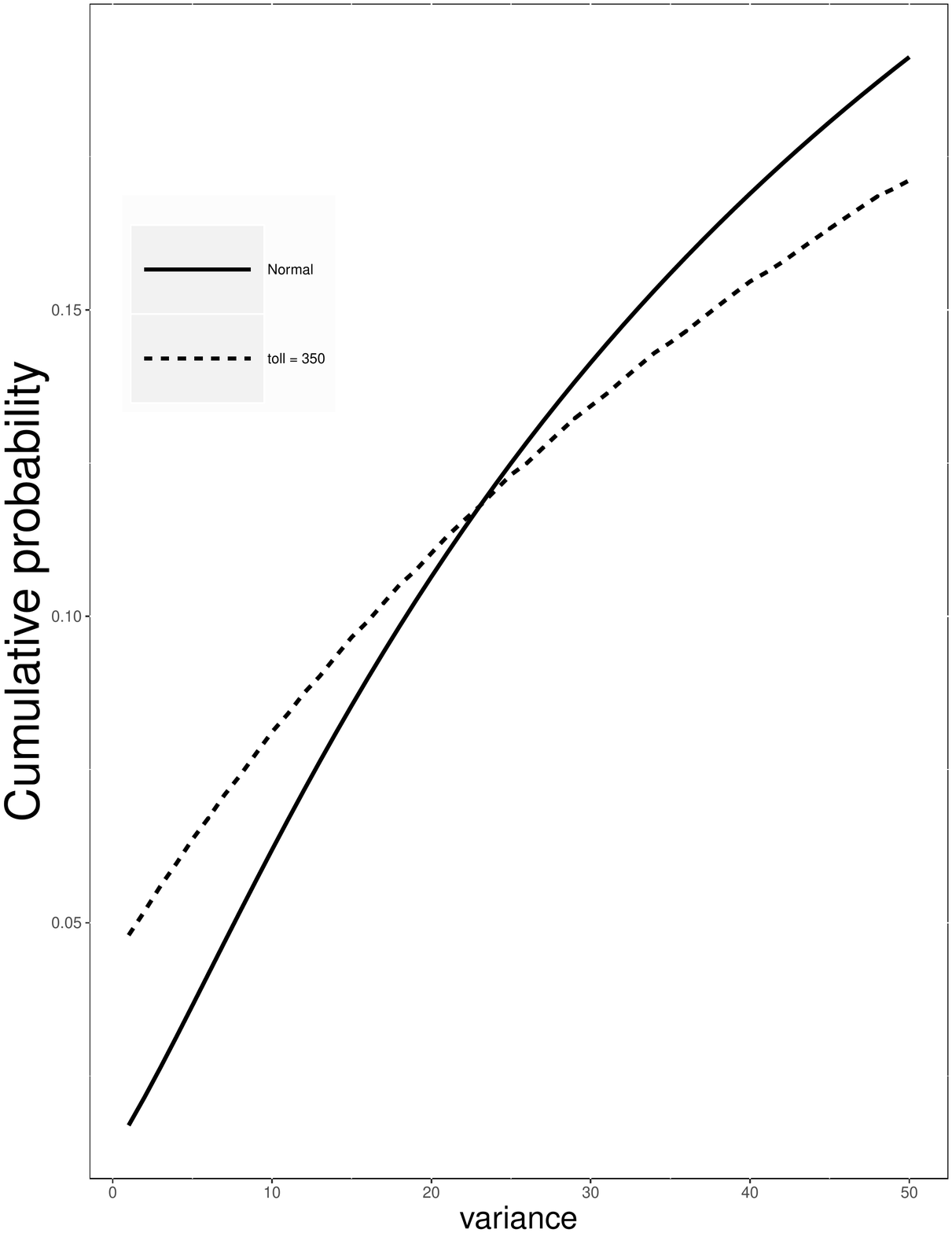} %
	\caption{toll=350}%
	\label{fig:350}
	\end{subfigure}
	~
	\begin{subfigure}[b]{0.45\textwidth}
		\centering
   \includegraphics[width=\textwidth]{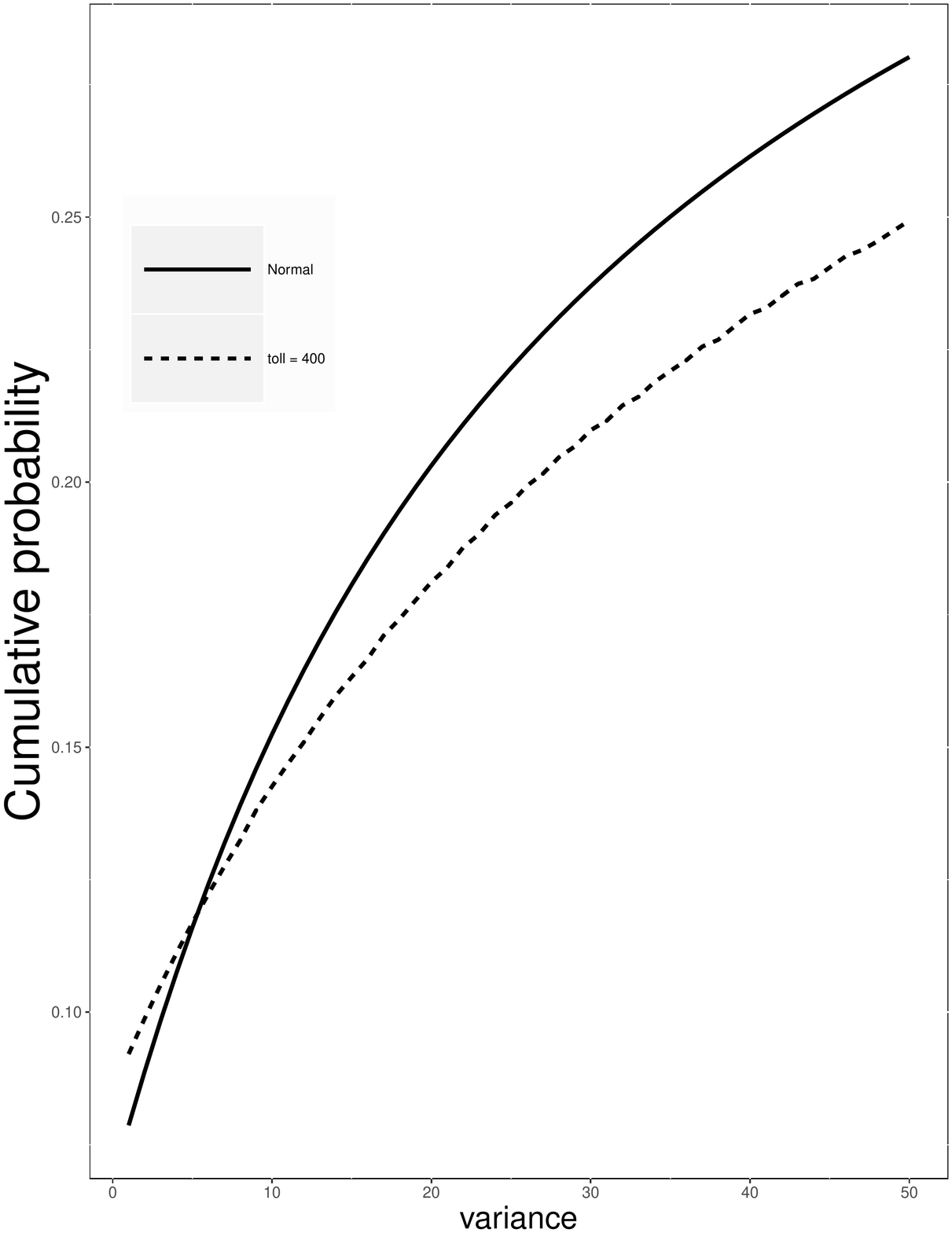} %
	\caption{toll=400}%
	\label{fig:400}
	\end{subfigure}
	\caption{Comparison of two-point distribution calculated at lower level for fixed toll with Normal distribution with same mean and variance}%
    \label{fig:2pt_vs_normal}%
\end{figure}

We believe this behavior is related to maximum entropy probability distributions. The entropy of a probability distribution represents the amount of uncertainty associated with the distribution. The distribution maximizing the entropy is believed to be a good prior distribution. As observed by \cite{roels_OR2008} entropy maximization is not a decision making criteria which can be used under uncertainty, it can only be used as selection criteria for selecting a probability distribution within a stochastic model. Also, it is similar to a barrier function (\cite{boyd_vandenberghe}). The entropy of a discrete random variable with a given distribution is defined as,

\begin{equation}
H(c) = - \sum_{i} p(c_i) \mbox{log} \hspace{0.1pc} p(c_i).
\end{equation}
Our approach can be related to maximizing entropy by replacing the $log p(x)$ with $g(x)$. In other words maximizing (minimizing negative of) a similar measure as entropy. Let $G_r(x)$ be defined as
\begin{equation}
G_r(c) =  \frac{1}{D}\sum_{i} p(c_i) g(c_i),
\end{equation}
where is $D$ is some fixed constant, then $G$ is an approximation of $H$ for each value of $r$. Our approach can be interpreted as approximating $H$ using $G$ by choosing $r=c_i$ where $c_i(1-\sum_{c_i>=r} p(i))$ is maximized. We wonder if it is possible to quantify this relationship and leave it for the future study.

\section{Pricing in General networks}
\subsection{Multiple parallel arcs}

 Let us first consider the immediate extension to a network where there are $k$ non-toll arcs parallel to the toll arc between origin and destination. Let $a_1,\ldots, a_k$ be the non-toll arcs. We input the mean and variance limits ($\overline{u}$, $\underline{u}$ and $\overline{\kappa}$) of the data obtained from taking the following minima, $\min_{i=1}^k c^s_{a_i}$ to calculate the two point robust toll.\\

 Let us now consider the above network when the assumption that the non-toll costs on toll arc are not zero. Let $a_{k+1}$ be the toll arc. To apply our method to this case we calculate the mean and variance limits ($\overline{u}$, $\underline{u}$ and $\overline{\kappa}$) of
\begin{equation}\label{extension}
 \min_{i=1}^k [c^s_{a_i} - c^s_{a_{k+1}}]
\end{equation}
for all $s$.

\subsection{General networks}
 Consider now a general single commodity network with multiple toll arcs, for example the network on the left given in Figure \ref{conversion}. To apply UFN approach we construct an equivalent parallel network as shown in the right side in Figure \ref{conversion}. For each path in this parallel network with toll arcs we will calculate the state minima given in (\ref{extension}) by ignoring all other paths with toll arcs. That is we calculate the robust toll on each toll path as if that is the only path with toll arcs in the network. This we treat as an upper bound on the total toll on that path. We then solve an integer/linear programming problem to allocate the tolls to individual toll arcs. Suppose the upper bounds for the paths in the example network in Figure \ref{conversion} are $\varsigma_1$, $\varsigma_2$, $\varsigma_3$ respectively from left to right, then we solve the following optimization problem to solve for prices of $r_1$, $r_2$, and $r_3$

\begin{align*}
 \max \hspace{0.5pc} r_1+r_2+r_3 \\
s.t \hspace{0.5pc}  r_2+r_3 \leq \varsigma_1\\
 r_1+r_2 \leq \varsigma_2\\
r_1 \leq \varsigma_3\\
 r_i \in \mathbb{Z}  \nonumber
\end{align*}

\begin{figure}[ht]
    \centering
  {\includegraphics[scale=0.4]{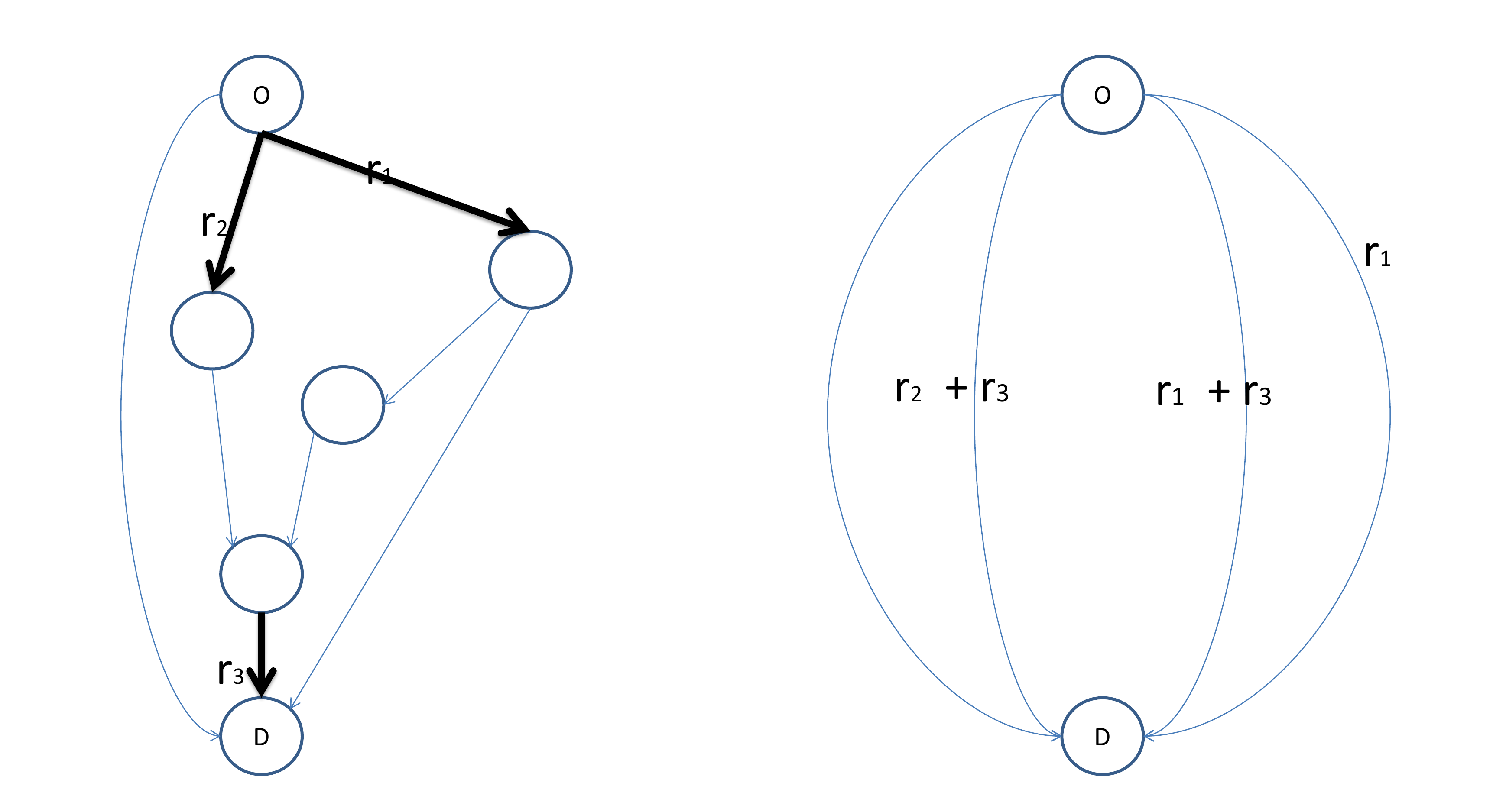} }%
	\caption{General network and an equivalent parallel network}%
    \label{conversion}%
\end{figure}

In theory there can be many paths between any two nodes in a network. However, in practice especially in real world transportation networks the number of paths between any two nodes/cities is usually small or limited. Also, many paths between pairs of cities are far from optimal in any of the scenarios, that is they are usually dominated by a few number of paths. We observed this in Chicago data where there may be several paths between two cities but most often it was only few paths which were optimal with many paths never optimal, not even in a single state. So, in practice even if there are many paths we can still use the above approach.

\section{Computational experiments}\label{comp_exps}

We performed two sets of numerical experiments to assess the UFN model, first set of experiments is on simulated data while the second set of experiments are performed on real-world traffic data collected by City of Chicago. In Section \ref{sim_data} we present our numerical results on simulated data. In Section \ref{chicago_data} we assess the performance of two point toll algorithm on City of Chicago data.

\subsection{Performance metric}
A commonly used metric to measure the robustness of an algorithm in pricing literature is (relative) regret.  We follow this practice and  we use the percentage relative regret from using the robust toll which is calculated as follows:

\begin{equation*}
 \mbox{relative regret}(\%) = \frac{\mbox{optimal revenue  - robust toll revenue}}{\mbox{optimal revenue}},
\end{equation*}
where optimal revenue is revenue obtained by using the optimal price.
From these experiments we want to understand the answers to the following two questions:

\begin{itemize}
\item how bad are revenues from robust tolls compared to the optimal revenues?
\item how do robust tolls compare to optimal tolls?
\end{itemize}
In both experiments we used the two-point approximate algorithm to compute the robust tolls, and we set $T=50$, $\#H = 1$, and $\bar{kappa} = 1$. %All the experiments are coded in C++ using Microsoft Visual Studio  C++ 2010 on a Dell Latitude E6400PC with Intel Core 2 duo processor.

\subsection{Simulated data}\label{sim_data}
In this section we report the performance of our approach with numerical experiments on simulated data. We have done experiments to assess the robustness of our procedure under two different experimental set-ups differing in network structure and cost distributions. We explain them below.

\begin{itemize}
\item {\bf First-Experiment}: We consider a parallel network with five parallel links connecting origin to destination. In this set-up we fix the distributions of the links to be same but allow the parameters to vary randomly within a given interval.

\item {\bf Second-Experiment}: We consider the same network as in first but the distributions on each links can be different including parameters.

\end{itemize}

In both experiments we would like to understand the robustness of the two-point toll. The distributions we use are Beta, Gamma, Normal and Lognormal. The parameters for each distribution are selected uniformly from an interval. The parameter intervals are given in Table \ref{paras}.

\begin{table}
\begin{center}
\begin{tabular}{c c c}
\hline
Distribution	&	First parameter	&	second parameter 	\\
\hline
Beta & [2,5] & [2,5] \\
Beta & [1,3] & [1,3] \\
Gamma & [1,3] & [$\frac13,\frac15$] \\
Normal & [90,110] & [10,30] \\
Lognormal & [0.1,0.3] & [0.1,0.3]\\
\hline
\end{tabular}
\caption{Distributions and parameters used}
\label{paras}
\end{center}
\end{table}

We first created 50 samples (history sample), from each distribution which are used to calculate the robust tolls. We then created 2500 random samples from each distribution and computed optimal revenue generating tolls for each of these samples. We compare the revenues from optimal tolls in each of these 2500 samples with revenues when a robust toll is used which is calculated from a sample in history sample. To calculate an optimal toll for a given instance we try each integer in $\bar{\Omega}$ and select the toll which generates the most revenue. In total we compare robust tolls with optimal tolls on 125000 samples.

\subsubsection{Results and Discussion: Fixed distributions}
In this section we will evaluate the robustness of our two-point robust toll on the instances when all parallel arcs have same distributions but the parameters can be selected randomly in the intervals given in Table \ref{paras}. Table \ref{table:exp2} displays the average percentage relative regret for each of the four distributions. From Table \ref{table:exp2} we observe that the robust toll achieves a regret less than $14\%$ in all distributions with except Gamma all other figures below $10\%$. On the other hand we observed fixing the toll equal to sample mean can have give regret as high as $23\%$. Furthermore, as seen from standard deviation values variation in regrets is also not large. This suggests that revenues from robust toll compare well especially given the fact that the toll decision is taken with minimal knowledge about the network cost distributions.

As previously pointed out a measure of robustness of the toll is how it compares with the optimal revenue generating tolls. Figure \ref{fig:exp21_1}-\ref{fig:exp21_4} displays the comparison of minimum, maximum and average values of tolls over the 50 history samples, and optimal revenue generating tolls in each of the 2500 samples (sorted in increasing order). From Figure \ref{fig:exp21_1}-\ref{fig:exp21_4} we observe that it is possible that the UFN tolls can be too high or too low especially as seen in Normal distributions. However, the average robust toll compares well with optimal tolls and is very close to average optimal toll in almost all with slightly below average in case of Beta. Also the variability in robust tolls displayed in Table \ref{table:exp21} illustrates the robustness of UFN tolls. Figures also suggest that with a higher $\#H$ the variability can be further reduced.

In Figures (\ref{fig:gamma_dyna})- (\ref{fig:b_dyna}), we plot the performance for the four distributions when using average of the robust tolls found using the 50 history samples against the best static optimal toll treating each instance in the set of 2500 as a tolling period but toll setter cannot revise his price. In other words  Here we report the cumulative regrets over time. By cumulative regret we mean regret up to period $t$ calculated against a static optimal toll price. Since we compare against the static optimal price in the earlier periods average robust toll may perform well showing zero regret. As can be seen the cumulative regret from average robust price is less than $2\%$ in all cases.

\begin{table}[ht]
\begin{center}
\begin{tabular}{c c c}
\hline
Distribution	&	Average	& Stdev\\
\hline
\hline
Beta	&	7.62 & 5.77 \\
Gamma	&	13.57 & 12.51 \\
Lognormal	&	8.31	& 8.18 \\
Normal	&	7.36	& 8.12\\
\hline
\end{tabular}
\caption{Fixed case: average (\%) relative regret}
\label{table:exp2}
\end{center}
\end{table}

\begin{table}
\begin{center}
\begin{tabular}{c c }
\hline
Distribution	&	Robust toll		\\

	&	 Stdev \\
\hline
\hline
Beta	&	4.94 \\
Gamma	&	8.56\\
Lognormal	&	5.16	 \\
Normal	&	5.13	 \\
\hline
\end{tabular}
\caption{robust toll variation}
\label{table:exp21}
\end{center}
\end{table}

\begin{figure}[htbp]
    \centering
		\begin{subfigure}[b]{0.45\textwidth}
		\centering
   \includegraphics[width=\textwidth]{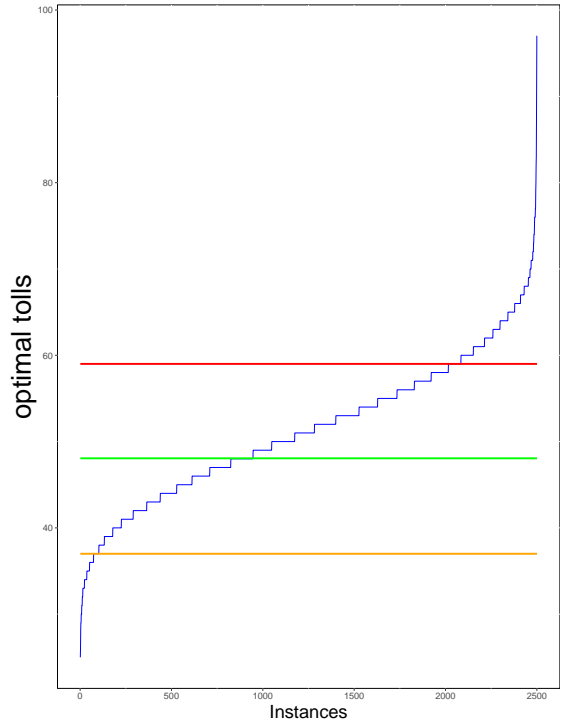} %
	\caption{Beta distribution}%
	\label{fig:exp21_1}
	\end{subfigure}
	~
	\begin{subfigure}[b]{0.45\textwidth}
		\centering
   \includegraphics[width=\textwidth]{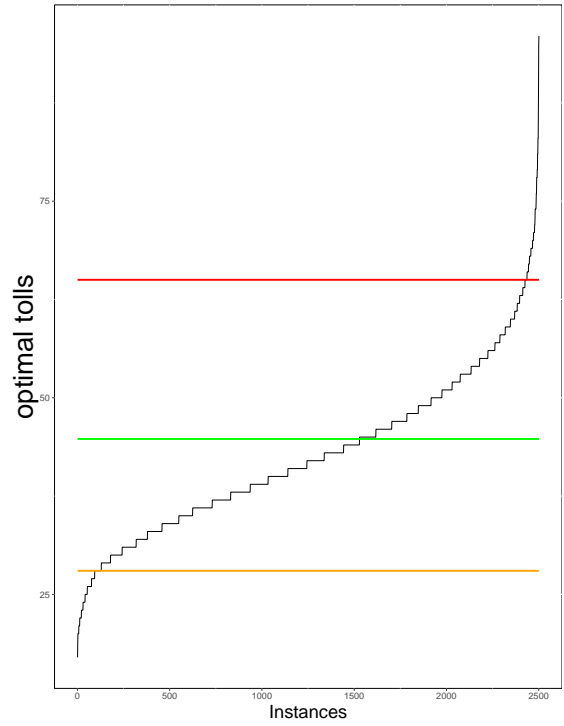} %
	\caption{Gamma distribution}%
	\label{fig:exp21_2}
	\end{subfigure}
	\\
	\begin{subfigure}[b]{0.45\textwidth}
		\centering
   \includegraphics[width=\textwidth]{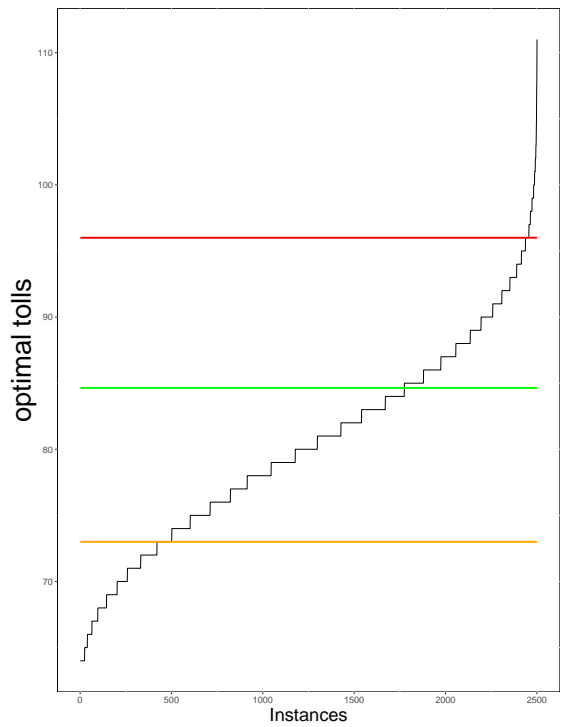} %
	\caption{Lognormal distribution}%
	\label{fig:exp21_3}
	\end{subfigure}
	~
	\begin{subfigure}[b]{0.45\textwidth}
		\centering
   \includegraphics[width=\textwidth]{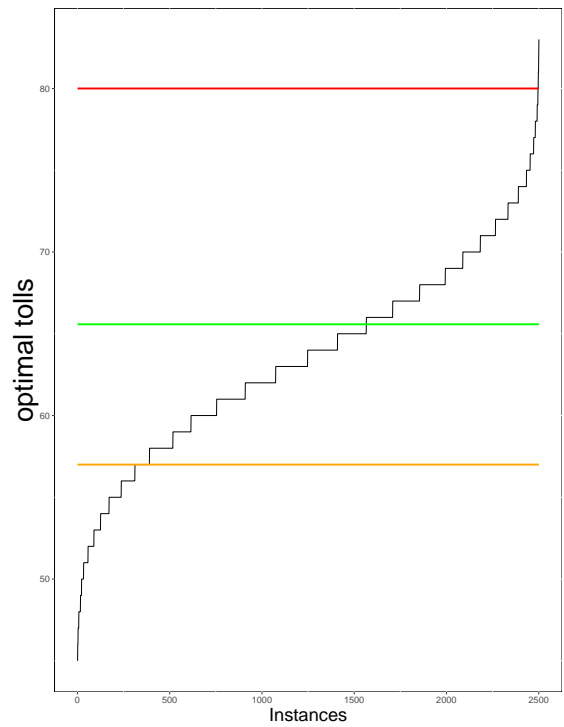} %
	\caption{Normal distribution}%
	\label{fig:exp21_4}
	\end{subfigure}%
		\caption{Comparison of optimal tolls with robust tolls with top, middle and bottom horizontal lines corresponding to maximum, average and minimum robust tolls.}
\end{figure}

\begin{figure}[htbp]
    \centering
	\begin{subfigure}[b]{0.45\textwidth}
		\centering
   \includegraphics[width=\textwidth]{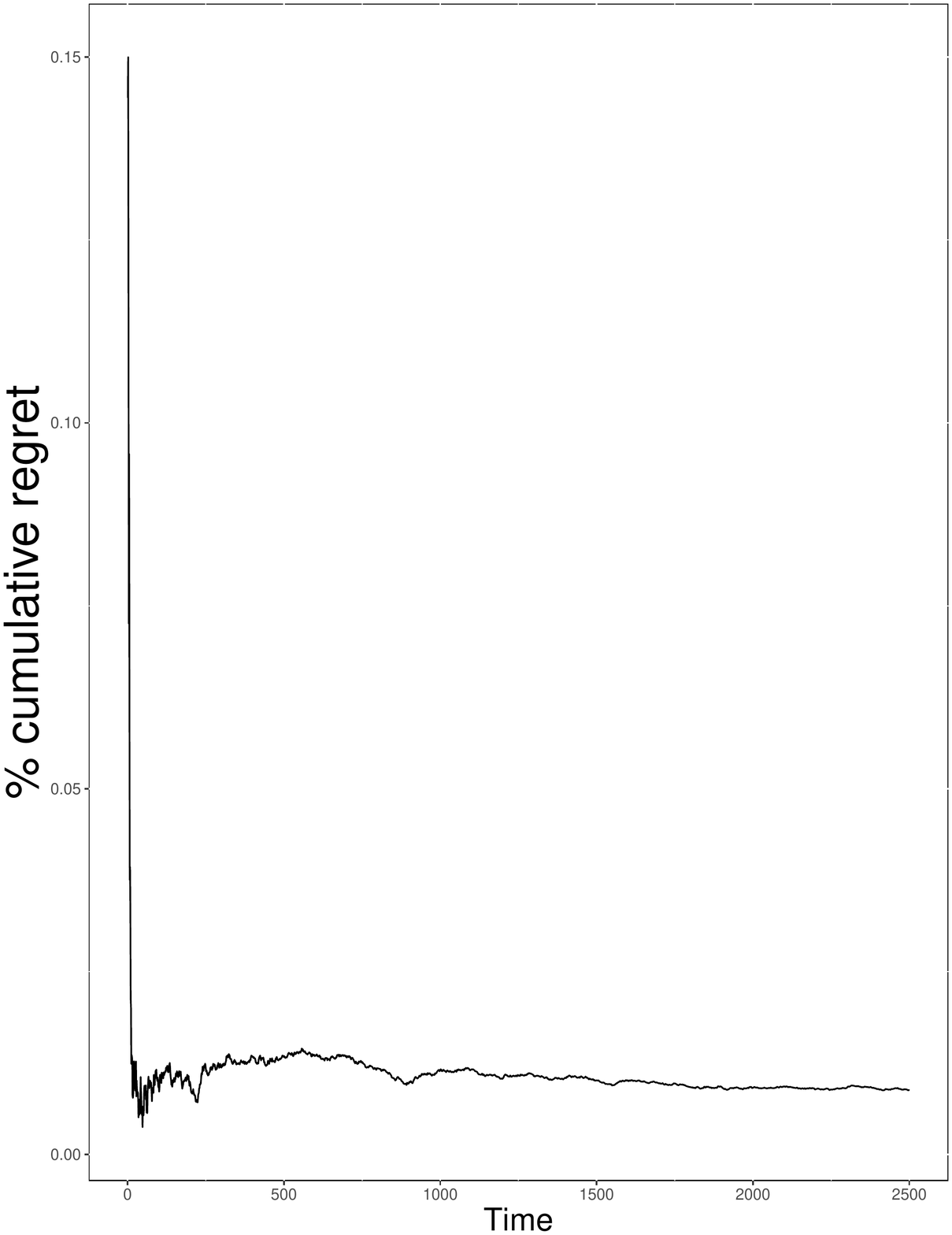} %
	\caption{Gamma distribution}%
	\label{fig:gamma_dyna}
	\end{subfigure}
	~
	\begin{subfigure}[b]{0.45\textwidth}
		\centering
   \includegraphics[width=\textwidth]{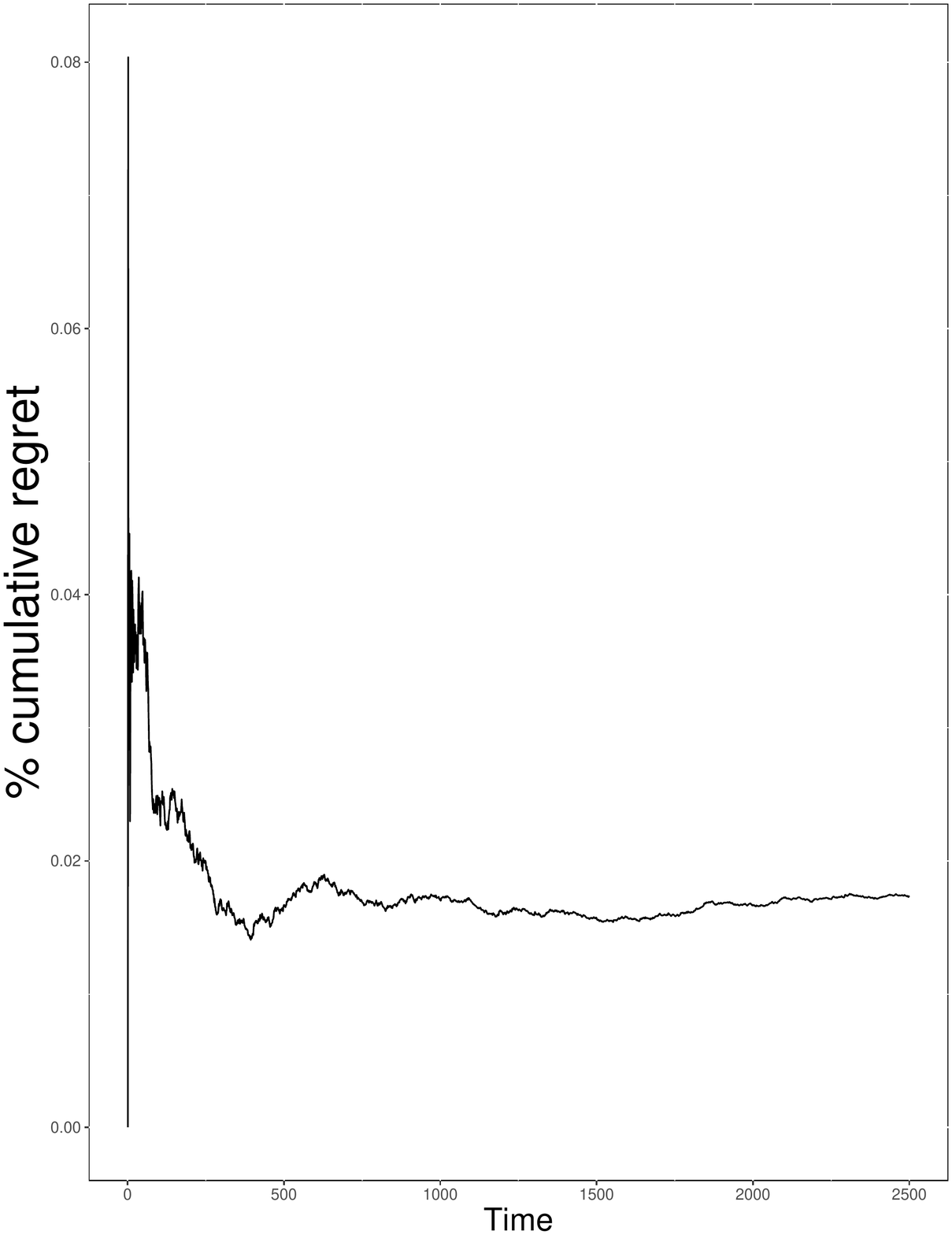} %
	\caption{Lognormal distribution}%
	\label{fig:l_dyna}
	\end{subfigure}
	~
	\begin{subfigure}[b]{0.45\textwidth}
		\centering
   \includegraphics[width=\textwidth]{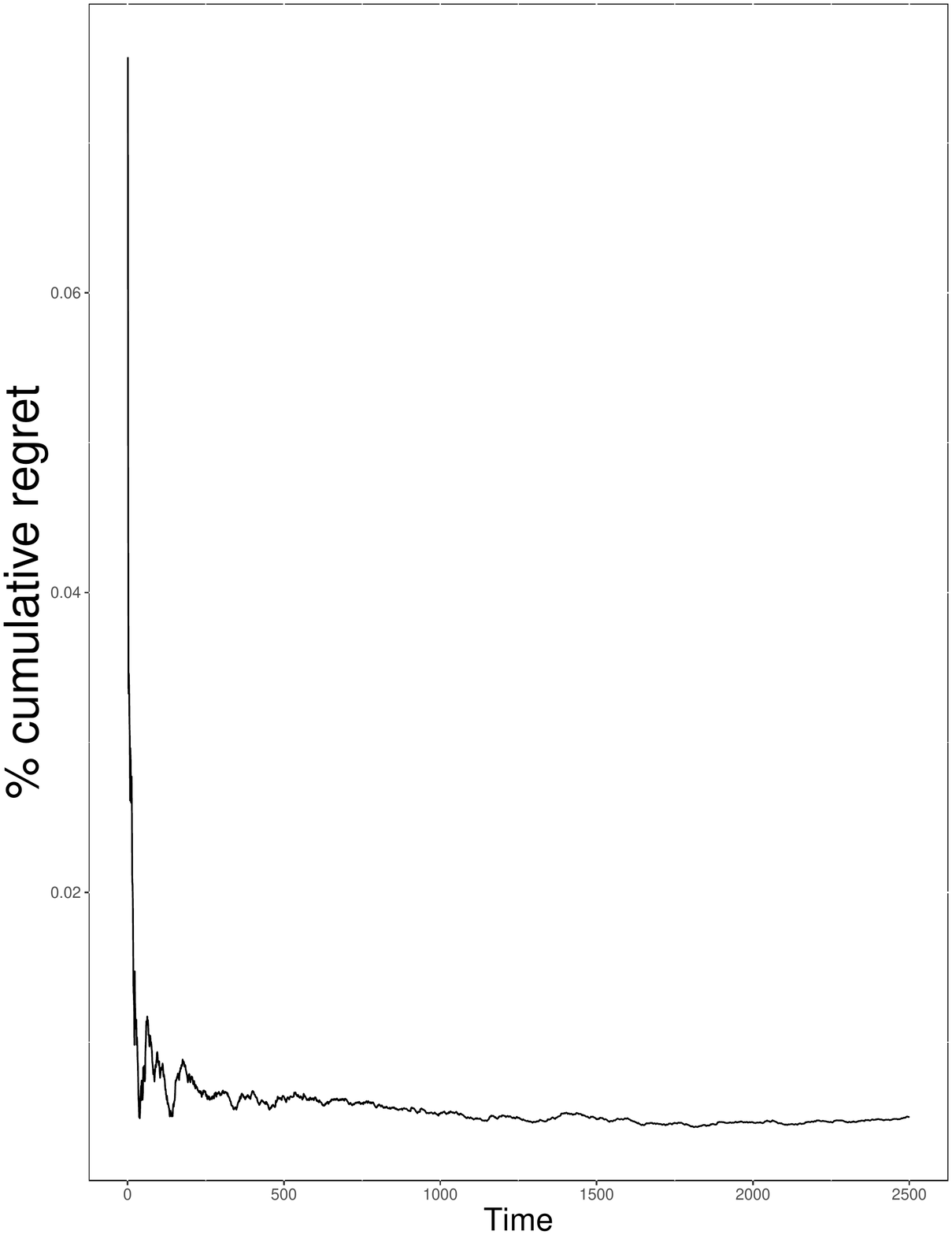} %
	\caption{Normal distribution}%
	\label{fig:n_dyna}
	\end{subfigure}
	~
	\begin{subfigure}[b]{0.45\textwidth}
		\centering
   \includegraphics[width=\textwidth]{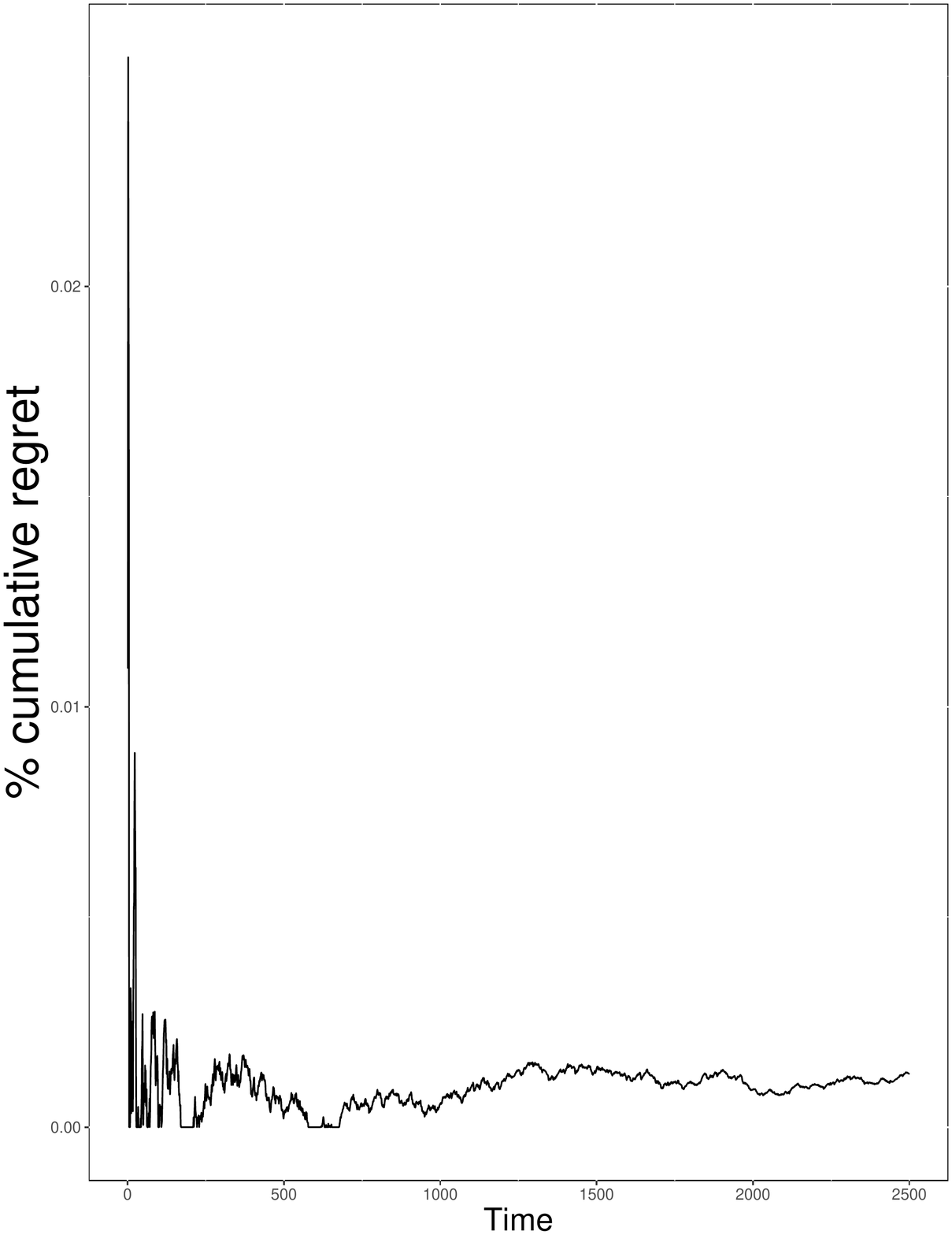} %
	\caption{Beta distribution}%
	\label{fig:b_dyna}
	\end{subfigure}
		\caption{dynamic case performance.}
\end{figure}

\begin{table}[ht]
\begin{center}
\begin{tabular}{c c c}
\hline
Distribution	&	Average	& Stdev\\
\hline
\hline
Beta	&	6.44 & 5.77 \\
Gamma	&	10.2 & 12.51 \\
Lognormal	&	6.73	& 8.18 \\
Normal	&	5.11	& 8.12\\
\hline
\end{tabular}
\caption{Fixed case: average (\%) relative regret with average robust toll}
\label{table:ave1}
\end{center}
\end{table}
\vspace{1pc}

\subsubsection{Results and Discussion: Mixed distributions}

In this section we will evaluate the robustness of UFN toll when arcs in the (same) network can have different distribution with parameters again chosen randomly from intervals given in Table \ref{paras}. We observe from Table \ref{exp3} that average regret from the robust toll is less than that in the case of fixed distribution case. This is also reflected in Figure \ref{fig:exp3} which again displays the comparison of minimum, maximum and average values of robust tolls with optimum revenue generating tolls. The average robust toll is again slightly higher than but very close to the optimal average.

\begin{table}
\begin{center}
\begin{tabular}{c c c c c}
\hline
Distribution	&	Average	 & Stdev \\
\hline
\hline
Mixed	&	7.84\%	& 6.2\%  \\
\hline
\end{tabular}
\caption{Exp 3: average (\%) relative regret}
\label{exp3}
\end{center}
\end{table}

\begin{figure}[htbp]
    \centering
  {\includegraphics[scale=0.3]{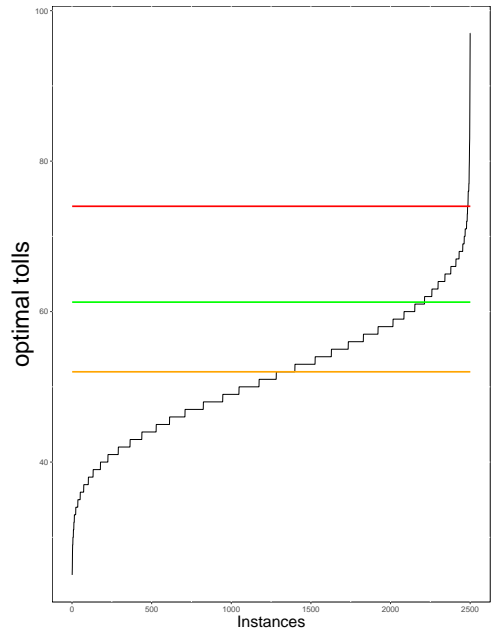} }%
	\caption{Exp. 3: Comparison of optimal tolls with robust tolls}%
    \label{fig:exp3}%
\end{figure}

\subsection{Real data}\label{chicago_data}
In real world the times of travel and hence costs may not have any known distribution which can be analytically expressed and may have seasonality and trends over time. Moreover, model which performs well on simulated data may not perform well on real data as modeling assumptions may not be satisfied. To test our approach on real data which as we explain below is given as travel times on a real road network, we assume the generalized cost is proportional to the time taken. We first explain the details of data collection and then give then details of experimental setup and results.

\subsection{Data Collection and Cleaning}

%\todo{dates need to be checked, also scaling of times explained or not?}

We used data provided by the City of Chicago\footnote{https://data.cityofchicago.org}, which provides a live traffic data interface. We recorded traffic updates in a 15-minute interval over a time horizon of 24 hours for several days between March 28th 2017  to  May 13th 2017. A total of 4363 data observations were used.

Every observation contains the traffic speed for a subset of a total of 1,257 segments. For each segment the geographical position is available, see the resulting plot in Figure~\ref{chicago1} with a zoom-in for the city center. %The complete travel speed data set contains a total of 54,295 observations.
There were 1,045 segments where the data was recorded at least once of the 4363 time points. %Nearly for 88\% of the segments, speeds were recorded for at least 50 records with only 1\% (10 segments) where only one observation was recorded.
We used linear interpolation to fill the missing records keeping in mind that data was collected over time. Segment lengths were given through longitude and latitude coordinates, and approximated using the Euclidean distance.

As segments are purely geographical objects without structure, we needed to create a graph for our experiments. To this end, segments were split when they crossed or nearly crossed, and start- and end-points that were sufficiently close to each other were identified as the same node. The resulting graph is shown in Figure~\ref{chicago2}; note that this process slightly simplified the network, but kept its structure intact. The final graph contains 538 nodes and 1308 arcs.

\begin{figure}[htbp]
\centering
\begin{subfigure}[c]{0.8\textwidth}
        \includegraphics[width=\textwidth]{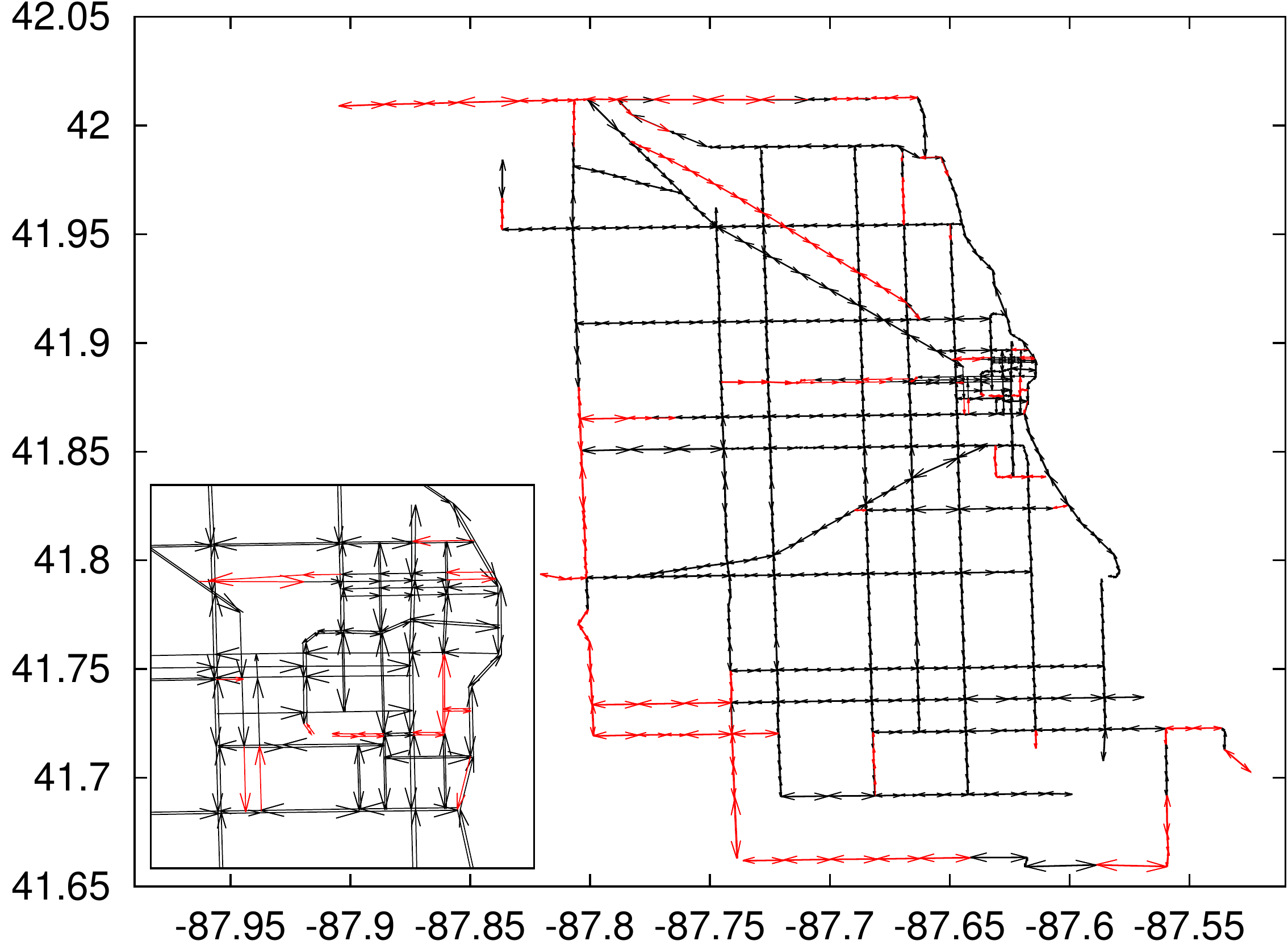}
        \caption{Raw segments with zoom-in for the city center. In red are segments without data.}
        \label{chicago1}
    \end{subfigure}
		~
    \hfill
\begin{subfigure}[c]{0.8\textwidth}
        \includegraphics[width=\textwidth]{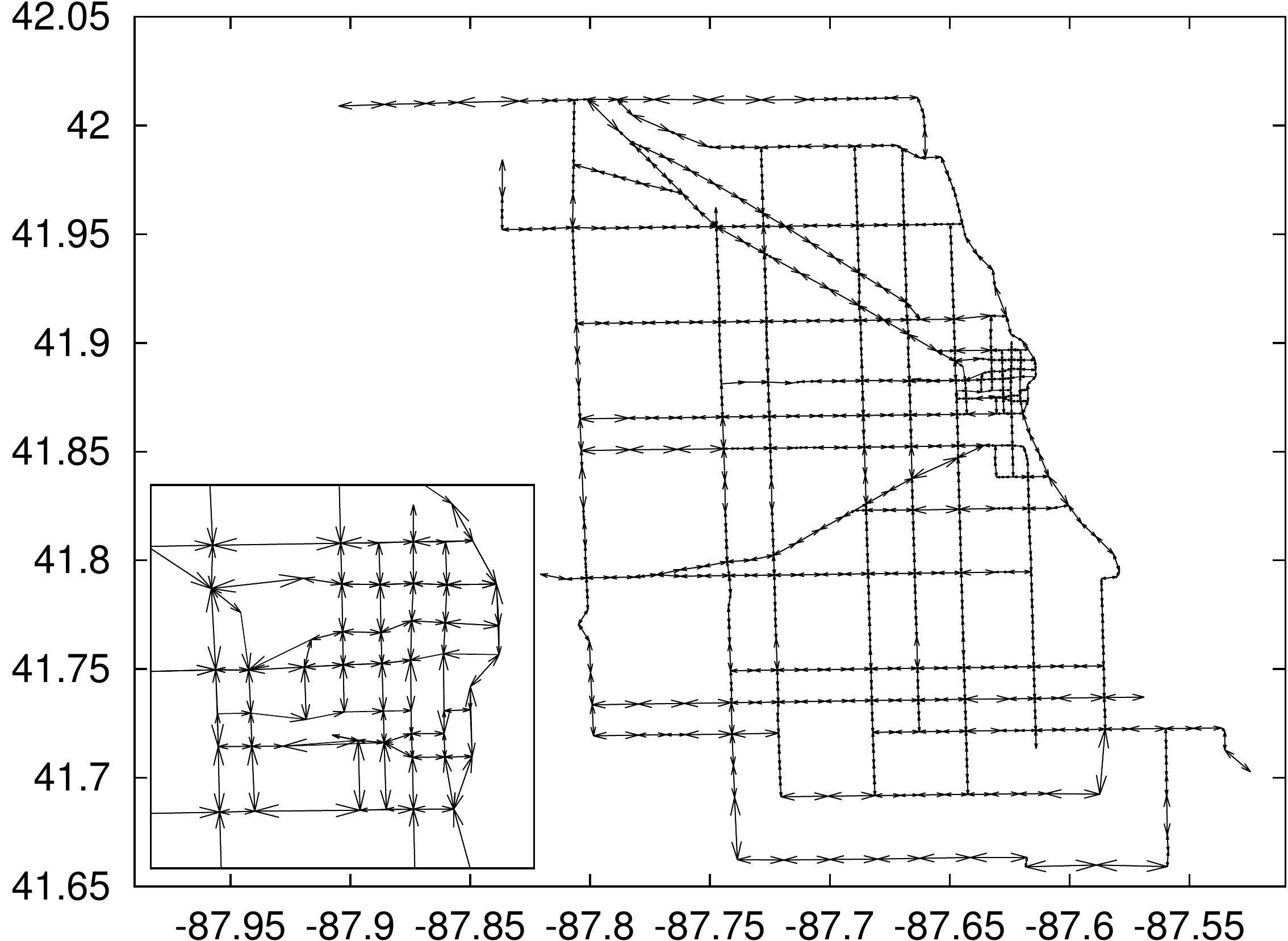}
        \caption{Resulting graph model with zoom-in for the city center.}
        \label{chicago2}
    \end{subfigure}
\caption{Chicago instance}
\end{figure}
\subsection{Experimental Setup, Results and Discussion}
We assume cost of travel on each road in this network is proportional to the time of travel. Since there are no toll roads in this network which we can use for our experiments we have randomly selected 200 pairs of cities and calculated tolls by imagining a toll road between these each of these pair of cities, under the assumption that non-toll cost is zero or such costs have been adjusted in the costs of the other roads. Of 4363 observations we used 2100 as history and all observations for calculating the regrets in all of 200 cases.

The average percentage regret over 200 pairs is given in Table \ref{tab:real}. As can be seen the average regret is less than that observed in simulated case. Interestingly, unlike the case of simulate data UFN tolls were set higher than optimal for all pairs which indicates the non-conservative of the approach. Figure \ref{fig:real_opt_ufn} illustrates distribution of ratio of UFN tolls over optimal tolls. UFN toll was set roughly $12\%$ higher in extreme case with most tolls within $5\%$ of optimal tolls.
\begin{table}
\begin{center}
\begin{tabular}{c c c c}
\hline
	Robust toll	&	mean-variance toll & Robust toll	&	mean-variance toll	\\

	Average	&	Average & Stdev & Stdev	\\
\hline
\hline
	5.64\%	&	30.08\%	& 5.01\% & 5.64\% \\
\hline
\end{tabular}
\caption{real data: average (\%) relative regret}
\label{tab:real}
\end{center}
\end{table}

\begin{figure}[htbp]
    \centering
  {\includegraphics[width=100mm,height=75mm]{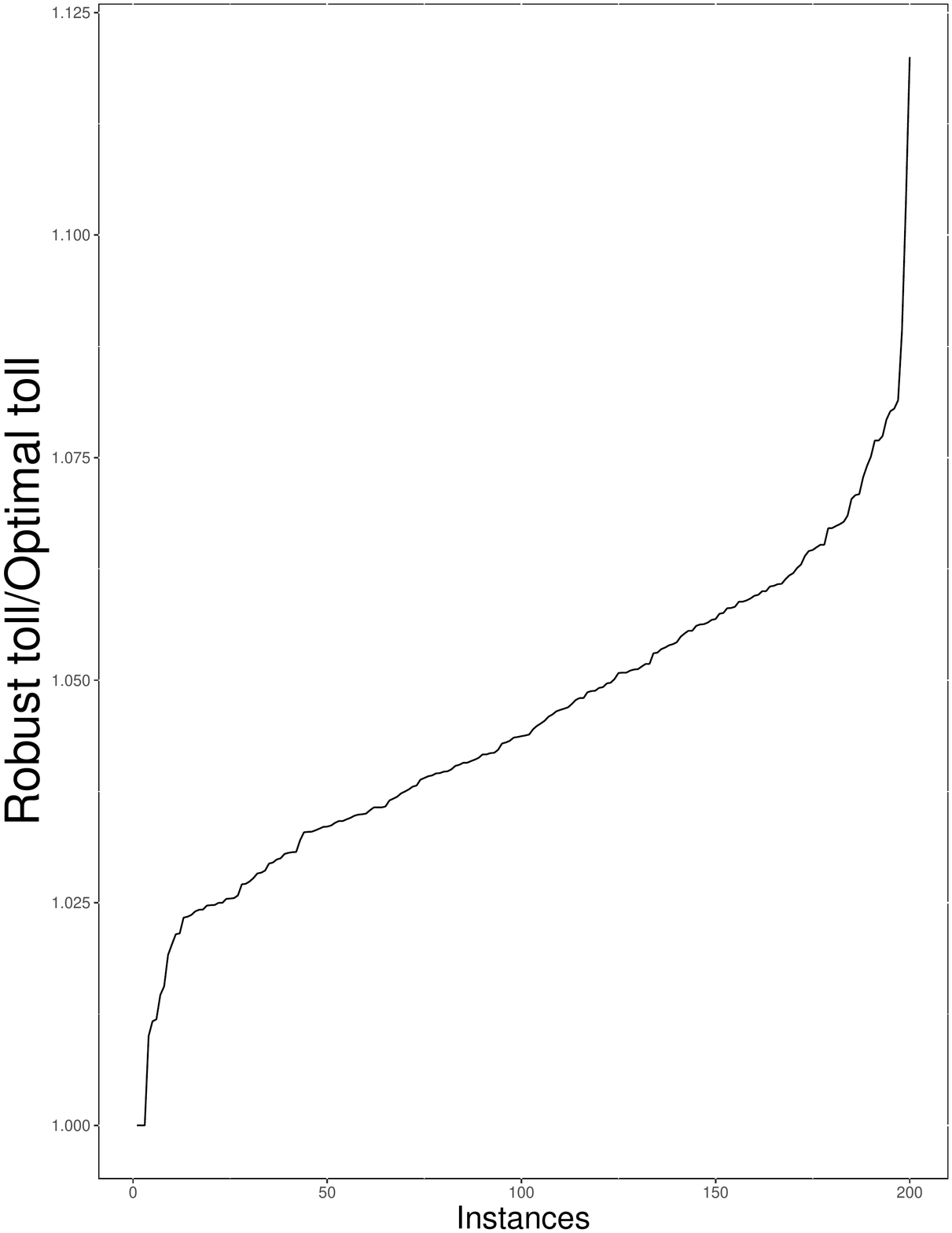} }%
	\caption{Exp. 3: Comparison of optimal tolls with robust tolls}%
    \label{fig:real_opt_ufn}%
\end{figure}

\section{Dynamic pricing and Future Work}\label{future}
In practice a toll setter may be able to revise the toll. However, revising power of toll setter is hugely limited as often they are subject to price controls such as caps on price increase and the number of increases. Also, the assumption of toll setter able to observe the cost may not be true or may be subject to inaccurate assumptions. In \cite{dokka_jacko}, the UFN model is extended to the dynamic case with price controls where toll setter learns the distribution by changing the price dynamically and observing the usage. The new robustness model can also be extended to more general networks such as multi-commodity networks and also including variable demands. We consider these extensions for our future work.

\section*{Acknowledgements}
We would like to thank Martine Labb\'e for an interesting discussion on the topic and also for pointing out references. We thank Marc Goerigk for spotting a mistake in an early version of the paper and for helping with Chicago data.

\bibliographystyle{informs2014trsc}
\bibliography{DZSN_refs}
\end{document}